\theoremstyle{plain} %% context in italic
\newtheorem{thm}{Theorem}[section] % Theorem 1.1, 1.2, 1.3, etc.
\newtheorem{prop}[thm]{Proposition} % Proposition 1.1, 1.2, 1.3, etc.
\newtheorem{lem}[thm]{Lemma} % Lemma 1.1, 1.2, 1.3, etc.
\newtheorem{cor}[thm]{Corollary} % Corollary 1.1, 1.2, 1.3, etc.
\theoremstyle{definition} %% context in roman
\newtheorem{rem}[thm]{Remark} % Remark 1.1, 1.2, 1.3, etc.
\theoremstyle{remark} %% context in roman
\title[Rational curves on a smooth Hermitian surface]
{
Rational curves on a smooth\\ Hermitian surface
%\thanks{This research was partially supported by the JSPS Grant-in-Aid for Scientific Research (No.\,26800016). }
}
\author
\address{Department of Mathematics, 
Graduate School of Science, 
Hiroshima University \\
1-3-1, Kagamiyama, Higashi-Hiroshima, Hiroshima, 739-8526, Japan}
\email{d153696@hiroshima-u.ac.jp}
\date{}
\dedicatory{
}
\subjclass[2010]{51E20, 14M99, 14N99.}
\keywords{rational curve, Hermitian surface, positive characteristic.}
\begin{document}
\maketitle

\begin{abstract}
We study the set $R$ of nonplanar rational curves of degree $d<q+2$ on a smooth Hermitian surface $X$ of degree $q+1$ defined over an algebraically closed field of characteristic $p>0$, where $q$ is a power of $p$. We prove that $R$ is the empty set when $d<q+1$. In the case where $d=q+1$, we count the number of elements of $R$ by showing that the group of projective automorphisms of $X$ acts transitively on $R$ and by determining the stabilizer subgroup. In the special case where $X$ is the Fermat surface, we present an element of $R$ explicitly.
\end{abstract}

\section{Introduction}
Let $q$ be a power of a prime $p$, and $k$ an algebraic closure of the finite field $\mathbb{F}_{q}$. For a matrix $m$ with entries in $k$, we denote by $m^{(q)}$ the matrix whose entries are the $q$-th power of those of $m$. We denote by a column vector $\bm{x}={}^\mathrm t\hspace{-0.5mm}(x_0,x_1,x_2,x_3)$ a point in the $k$-projective space $\mathbb{P}^3$. Let $A$ be a nonzero $4$-by-$4$ matrix with entries in $k$. A $k$-Hermitian surface $X_A$ is
defined by
\begin{equation*}
X_A:=\{\bm{x}\in\mathbb{P}^3\ |\ {}^\mathrm t\hspace{-0.5mm}\bm{x}A\bm{x}^{(q)}=0\}.
\end{equation*}
If $A$ is a Hermitian matrix, namely $A$ has the entries in $\mathbb{F}_{q^2}$ and ${}^\mathrm t\hspace{-0.5mm} A=A^{(q)}$, the surface $X_A$ is called a Hermitian surface. 
It is easily shown that $X_A$ is smooth if and only if $A$ is invertible.

The geometry of Hermitian varieties was systematically investigated by B. Segre in \cite{Se}.
Especially, the number of linear spaces lying on a Hermitian variety and their configuration were considered.
It was shown that the numbers of points and lines on a smooth Hermitian surface in $\mathbb{P}^3(\mathbb{F}_{q^2})$ are equal to $(q^3+1)(q^2+1)$ and $(q^3+1)(q+1)$ respectively, and no plane is contained. Further, the set of points and lines on a smooth Hermitian surface forms a block design, see also \cite{BC}. In recent years, the number of rational normal curves totally tangent to a smooth Hermitian variety $X$ has been determined in \cite{Sh} by considering the action of the automorphism group of $X$ on the set of the curves. In \cite{Sh2}, non-singular conics totally tangent to the smooth Hermitian curve of degree $6$ in characteristic $5$ were utilized for a geometric construction of strongly regular graphs. On the other hand, projective isomorphism classes of degenerate Hermitian varieties of corank $1$ and the automorphism group of each isomorphism class have been determined in \cite{Ho}.

Let $A$ be an invertible $4$-by-$4$ matrix with entries in $k$.
We will be concerned with rational curves of degree $>1$ on a smooth $k$-Hermitian surface $X_A$.
Let $d$ be a positive integer and $F$ a $4$-by-$(d+1)$ matrix of ${\rm rank}(F)\ge2$ with entries in $k$. A rational curve $C_F$ of degree $d$ in $\mathbb{P}^3$ is the image of a rational map
\begin{equation}
\mathbb{P}^1\ni{}^\mathrm t\hspace{-0.5mm}(s,t)\longmapsto F\ {}^\mathrm t\hspace{-0.5mm}(s^d,s^{d-1}t,\dots,st^{d-1},t^d)\in\mathbb{P}^3.\label{lem1}
\end{equation}
We call ${\rm rank}(F)$ the rank of the curve $C_F$.
If ${\rm rank}(F)=2$,
then $C_{F}$ degenerates
to a line. If ${\rm rank}(F)=3$,
then $C_{F}$ degenerates to a plane curve of degree $\ge2$.
When
${\rm rank}(F)=4$,
the curve $C_F$ is nondegenerate and is a space curve of degree $\ge3$.
Then $C_F$ is said to be nonplanar, namely $C_F$ is not contained in any plane.
Thus the study of rational curves of rank $2$ on $X_A$ is reduced to that of lines on $X_A$.
Further, an algebraic curve of rank $3$ on $X_A$ is
a smooth $k$-Hermitian curve of degree $q+1$, which is of genus $q(q-1)/2>0$.
Hence we may restrict ourselves to the case of rank $4$.

Our results are as follows:
\begin{thm}\label{thm0}
There is no nonplanar rational curve of
degree $\le q$ on a smooth $k$-Hermitian surface.
\end{thm}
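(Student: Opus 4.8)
The plan is to translate the condition $C_F\subseteq X_A$ into a single polynomial identity in $(s,t)$ and then play it off against a rank count. Writing $\bm{v}=\bm{v}(s,t)=\lt(s^d,s^{d-1}t,\dots,t^d)$ for the vector of degree-$d$ monomials, a point $F\bm{v}$ of $C_F$ lies on $X_A$ exactly when $\lt(F\bm{v})A(F\bm{v})^{(q)}=0$. Since the $q$-th power map acts entrywise, $(F\bm{v})^{(q)}=F^{(q)}\bm{v}^{(q)}$, so this reads $\lt\bm{v}\,M\,\bm{v}^{(q)}=0$ with $M:=\lt F A F^{(q)}$, a $(d+1)\times(d+1)$ matrix. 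As $k$ is infinite, $C_F\subseteq X_A$ is equivalent to the vanishing of this expression as a polynomial in $s,t$.

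Next I would expand. Writing $M=(m_{ij})_{0\le i,j\le d}$, the left-hand side equals $\sum_{i,j}m_{ij}\,s^{(d-i)+(d-j)q}\,t^{\,i+jq}$, a form of degree $d(q+1)$; the coefficient of a given monomial is $\sum_{i+jq=e}m_{ij}$, indexed by the $t$-exponent $e$ (the $s$-exponent being then determined). The crux is the combinatorics of the map $(i,j)\mapsto i+jq$ on $\{0,\dots,d\}^2$. When $d<q$ this map is injective, since $i-i'=(j'-j)q$ with $|i-i'|\le d<q$ forces $j=j'$; hence every coefficient is a single $m_{ij}$ and the vanishing forces $M=0$. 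When $d=q$ the only coincidences are the pairs $(0,\ell)$ and $(q,\ell-1)$ for $1\le\ell\le q$, both giving $e=\ell q$; so the vanishing forces $m_{ij}=0$ for every $(i,j)$ outside rows $i=0$ and $i=q$, together with the relations $m_{0,\ell}+m_{q,\ell-1}=0$. In either case the nonzero entries of $M$ occupy at most two rows, so $\mathrm{rank}(M)\le 2$.

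Finally I would contradict this with a rank computation. Because $X_A$ is smooth, $A$ is invertible, and because $C_F$ is nonplanar, $\mathrm{rank}(F)=4$; as $x\mapsto x^q$ is an automorphism of $k$, also $\mathrm{rank}(F^{(q)})=4$. Then $AF^{(q)}$ has rank $4$, and Sylvester's inequality applied to $M=\lt F\cdot(AF^{(q)})$ with inner dimension $4$ gives $\mathrm{rank}(M)\ge 4+4-4=4$, whence $\mathrm{rank}(M)=4$. This contradicts $\mathrm{rank}(M)\le 2$, so no nonplanar $C_F$ of degree $\le q$ can lie on $X_A$.

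I expect the delicate step to be the $d=q$ collision analysis, namely pinning down exactly which index pairs $(i,j)$ survive and verifying that there are no triple coincidences (a value $(\ell+1)q$ arising from a third pair would need $i=2q$, which is out of range), rather than the rank bookkeeping, which is routine. I would also double-check that $\lt\bm{v}\,M\,\bm{v}^{(q)}$ vanishing at every point of $\mathbb{P}^1(k)$ really forces the polynomial to be identically zero, which holds since $k$ is infinite.
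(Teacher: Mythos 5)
Your proof is correct and follows essentially the same route as the paper: expand $\lt\bm{v}\,(\lt FAF^{(q)})\,\bm{v}^{(q)}\equiv 0$, analyze the collisions of the exponent map $(i,j)\mapsto i+jq$ to conclude $\mathrm{rank}(\lt FAF^{(q)})\le 2$, and contradict this with $\mathrm{rank}(F)=4$ and the invertibility of $A$. The only cosmetic difference is in the last step, where you invoke Sylvester's inequality while the paper extracts an invertible $4\times4$ submatrix $\lt F^*A{F^*}^{(q)}$; both are valid.
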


Let $R$ be the set of nonplanar rational curves of degree $q+1$ on a smooth $k$-Hermitian surface $X_A$. 
As will be seen later, the set $R$ is nonempty and each element is projectively isomorphic over $k$ to the smooth curve
\begin{equation*}
C_0:=\left\{{}^\mathrm t\hspace{-0.5mm}(s^{q+1},s^qt,st^q,t^{q+1})\in\mathbb{P}^3\ |\ {}^\mathrm t\hspace{-0.5mm}(s,t)\in\mathbb{P}^1\right\}.
\end{equation*}

We denote by ${\rm Aut}(X_A)$ the group of projective automorphisms of $X_A$. 
Let $n$ be a positive integer.
We deal with the group ${\rm PGU}_n(\mathbb{F}_{q^2})$ defined by
\begin{equation*}
\{Q\in{\rm GL}_n(\mathbb{F}_{q^2})\ |\ {}^\mathrm t\hspace{-0.5mm} QQ^{(q)}=I\}/\bm{\mu}_{q+1}I,
\end{equation*}
where $\bm{\mu}_{q+1}$ denotes the group of $(q+1)$-th roots of unity and $I$ denotes the unit matrix.
As is well-known, the group ${\rm Aut}(X_A)$ is isomorphic to ${\rm PGU}_4(\mathbb{F}_{q^2})$. Then we shall prove the following theorem.
\begin{thm}\label{thm1}
The group ${\rm Aut}(X_A)$ acts transitively on the set $R$, and the stabilizer subgroup is isomorphic
to
${\rm PGU}_2(\mathbb{F}_{q^4})$.
\end{thm}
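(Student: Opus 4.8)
The plan is to fix convenient coordinates, read off the stabilizer from the geometry of $C_0$ on its unique quadric, and then reduce transitivity to a normalization of the defining matrix $F$. Since any two smooth $k$-Hermitian surfaces are projectively equivalent over $k$ and the whole statement is equivariant under $\mathrm{PGL}_4(k)$, it suffices to treat one surface $X_A$ that actually contains $C_0$. A direct substitution of the parametrization $\lt(s^{q+1},s^qt,st^q,t^{q+1})$ into $\lt\bm{x}A\bm{x}^{(q)}$ shows that $C_0\subset X_A$ for the invertible Hermitian matrix $A$ with $A_{11}=A_{22}=1$, $A_{03}=A_{30}=-1$ and all other entries $0$; so from now on $C_0\in R$. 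The geometric input I would record is that $C_0$ lies on the smooth quadric $Q_0=\{x_0x_3=x_1x_2\}$ and is exactly the image, under the Segre embedding $Q_0\cong\mathbb{P}^1\times\mathbb{P}^1$, of the graph of the $q$-power Frobenius $\mathbb{P}^1\to\mathbb{P}^1$; thus $C_0$ has bidegree $(1,q)$ on $Q_0$. A dimension count (the $10$-dimensional space of quadrics restricts onto the $9$-dimensional span of the degree-$(2q+2)$ monomials occurring among the products $x_ix_j$) shows that, for $q\ge 3$, $Q_0$ is the unique quadric through $C_0$; the case $q=2$ I would handle separately.

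Next the stabilizer. Let $g\in\mathrm{Aut}(X_A)$ fix $C_0$. Then $g$ restricts to an automorphism of $C_0\cong\mathbb{P}^1$, given by some $\phi\in\mathrm{PGL}_2(k)$, and since $C_0$ spans $\mathbb{P}^3$ the map $g$ is the explicit linear map $N_\phi$ induced by $\phi$ on the four monomials $s^{q+1},s^qt,st^q,t^{q+1}$ (the Frobenius-twisted representation $\phi\mapsto\phi\otimes\phi^{(q)}$). Being canonical, the quadric $Q_0$ is preserved by $g$, hence so is $X_A\cap Q_0$; as this intersection is the union of $C_0$ (bidegree $(1,q)$) and the conjugate Frobenius graph $C_0^{T}$ (bidegree $(q,1)$), $g$ must also preserve $C_0^{T}$, which forces $\phi^{(q^2)}=\phi$, i.e. $\phi\in\mathrm{PGL}_2(\mathbb{F}_{q^2})$. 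Conversely I would check by direct computation that $N_\phi\in\mathrm{GU}_4(A)$ for every $\phi\in\mathrm{GL}_2(\mathbb{F}_{q^2})$, so that $\phi\mapsto N_\phi$ is an isomorphism onto the stabilizer. Finally I would identify $\mathrm{PGL}_2(\mathbb{F}_{q^2})\cong\mathrm{PGU}_2(\mathbb{F}_{q^4})$ (the standard isomorphism $\mathrm{PGU}_2(\mathbb{F}_{Q^2})\cong\mathrm{PGL}_2(\mathbb{F}_Q)$ with $Q=q^2$), which is the asserted stabilizer.

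For transitivity, every $C\in R$ is projectively isomorphic to $C_0$, so it too lies on a unique smooth quadric $Q_C$ and is a bidegree-$(1,q)$ component of $X_A\cap Q_C=C\cup C^{T}$. This reduces transitivity to two points: (i) that $\mathrm{Aut}(X_A)$ is transitive on the quadrics $\{Q_C:C\in R\}$, and (ii) that the stabilizer of $Q_0$ contains an element swapping $C_0$ and $C_0^{T}$ — for the latter the coordinate interchange $x_1\leftrightarrow x_2$, which fixes both $A$ and $Q_0$, does the job. The heart of the proof is (i). I would attack it by writing $C=C_F$ and examining the $(q+2)\times(q+2)$ matrix $G=\lt F A F^{(q)}$: the identity $\lt\bm{v}G\bm{v}^{(q)}\equiv 0$ (with $\bm{v}=\lt(s^{q+1},\dots,t^{q+1})$) forces almost all entries of $G$ to vanish and imposes the anti-diagonal relations $G_{0,j}+G_{q,j-1}=0$ and $G_{1,j}+G_{q+1,j-1}=0$; combined with $\mathrm{rank}\,F=4$, this pins down the shape of $F$ up to the reparametrization action of $\mathrm{PGL}_2(k)$ and the action of $\mathrm{GU}_4(A)$, and I would show the residual freedom is exactly absorbed by $\mathrm{GU}_4(A)$. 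Equivalently, and perhaps more cleanly, one can read each $C$ as an $\mathbb{F}_{q^4}$-structure on the Hermitian space compatible with the given $\mathbb{F}_{q^2}$-form and invoke the Witt-type conjugacy of such nested unitary structures.

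The step I expect to be the main obstacle is precisely (i): showing that $\mathrm{Aut}(X_A)$ moves any one of these special quadrics to $Q_0$. Either route is delicate — the explicit one because the sparse rank-$\le 4$ pattern of $G$ must be solved and the two group actions disentangled, and the structural one because it requires setting up the restriction-of-scalars embedding $\mathrm{GU}_2(\mathbb{F}_{q^4})\hookrightarrow\mathrm{GU}_4(\mathbb{F}_{q^2})$ and proving the relevant conjugacy. I would also be careful about the small case $q=2$, where $Q_C$ need not be unique, and about checking the converse inclusion $N_\phi\in\mathrm{GU}_4(A)$, which although routine is exactly where the $\mathbb{F}_{q^2}$-rationality of $\phi$ enters.
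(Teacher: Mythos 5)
Your stabilizer analysis is a genuinely different and attractive route: the paper never mentions the quadric $x_0x_3=x_1x_2$, and instead identifies the stabilizer algebraically as $\{g\in{\rm GL}_2(k)\mid \lt gg^{(q^2)}=\lambda I\}/k^{\times}I$ via the twisted-congruence formula $\lt\varphi_*(g)D_B\varphi_*(g)^{(q)}=\det(g)^qD_{\lt gBg^{(q^2)}}$ (Lemma \ref{lemM}). Your answer ${\rm PGL}_2(\mathbb{F}_{q^2})$ is consistent with the paper's (with $A=D_J$ the condition $\lt gJg^{(q^2)}=\lambda J$ reduces to $g^{(q^2)}=\mu g$ since $\lt gJg=\det(g)J$), and the Segre/Frobenius-graph picture makes the $\mathbb{F}_{q^2}$-rationality of $\phi$ transparent. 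Modulo the deferred check that $N_\phi$ preserves $X_A$ for all $\phi\in{\rm GL}_2(\mathbb{F}_{q^2})$ (which is exactly the computation of Lemma \ref{lemM}) and the $q=2$ case, this half is sound.

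The genuine gap is transitivity, and you have flagged it yourself: your step (i) is not a proof but a choice between two unexecuted strategies. Everything before it only reduces transitivity on $R$ to transitivity on the family of quadrics $\{Q_C\}$ (together with the swap $x_1\leftrightarrow x_2$); it does not establish it, and this reduction is where the entire content of the theorem lives. The paper closes precisely this gap with two ingredients you do not supply: Lemma \ref{lem2} pins down the sparse shape of $\lt FAF^{(q)}$ so that the orbit space ${\rm Aut}(X_A)k^{\times}\backslash S^*/{\rm Im}(\varphi)_*$ is identified with $k^{\times}\backslash M/\!\sim$, and then Lemma \ref{lemM} combined with Proposition \ref{prop1} (surjectivity of $B'\mapsto\lt B'B'^{(q^2)}$, a Lang/Beauville-type statement) shows every class $D_B$ is equivalent to $D_I$, so the orbit space is a single point. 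Your second suggested route --- ``Witt-type conjugacy of nested unitary structures,'' i.e.\ conjugacy of the embeddings ${\rm GU}_2(\mathbb{F}_{q^4})\hookrightarrow{\rm GU}_4(\mathbb{F}_{q^2})$ --- is morally the same statement, but it is asserted, not proved, and it is not an off-the-shelf citation in the form you need. As written, the proposal proves the stabilizer computation conditionally on transitivity but leaves the transitivity claim itself unestablished.
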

By Theorem \ref{thm1},
the cardinality of $R$ is equal to $|{\rm PGU}_4(\mathbb{F}_{q^2})|/|{\rm PGU}_2(\mathbb{F}_{q^4})|$.
We know by \cite[pp.64-65]{HT} that
\begin{equation*}
|{\rm PGU}_4(\mathbb{F}_{q^2})|=q^6(q^4-1)(q^3+1)(q^2-1)\ {\rm and}\ |{\rm PGU}_2(\mathbb{F}_{q^4})|=q^2(q^4-1).
\end{equation*}
Thus we have the following.
\begin{cor}\label{cor1}
$|R|=q^4(q^3+1)(q^2-1)$.
\end{cor}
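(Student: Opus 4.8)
The plan is to invoke the orbit--stabilizer theorem. Since Theorem~\ref{thm1} asserts that ${\rm Aut}(X_A)$ acts transitively on $R$, the set $R$ is a single orbit, and hence its cardinality equals the index in ${\rm Aut}(X_A)$ of the stabilizer subgroup of any chosen element. Using the isomorphism ${\rm Aut}(X_A)\cong{\rm PGU}_4(\mathbb{F}_{q^2})$ recorded in the text together with the identification of the stabilizer as ${\rm PGU}_2(\mathbb{F}_{q^4})$ supplied by Theorem~\ref{thm1}, I would write
\begin{equation*}
|R|=\frac{|{\rm PGU}_4(\mathbb{F}_{q^2})|}{|{\rm PGU}_2(\mathbb{F}_{q^4})|}.
\end{equation*}

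Next I would substitute the two group orders quoted from \cite[pp.64-65]{HT}, namely $|{\rm PGU}_4(\mathbb{F}_{q^2})|=q^6(q^4-1)(q^3+1)(q^2-1)$ and $|{\rm PGU}_2(\mathbb{F}_{q^4})|=q^2(q^4-1)$, and simplify. The factor $q^4-1$ cancels and $q^6/q^2=q^4$, leaving $|R|=q^4(q^3+1)(q^2-1)$, exactly as claimed.

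There is essentially no genuine obstacle here: all the geometric and group-theoretic content is already packaged in Theorem~\ref{thm1}, and the corollary reduces to a one-line arithmetic deduction. The only points meriting a moment's care are that the orbit--stabilizer count requires a \emph{finite} group, which ${\rm PGU}_4(\mathbb{F}_{q^2})$ indeed is, and that the stabilizers of distinct elements of $R$ are conjugate and hence of equal order, so the index is well defined independently of the chosen base curve. Transitivity guarantees both, so the computation above is justified.
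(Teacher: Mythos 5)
Your proposal is correct and follows exactly the paper's own derivation: the paper likewise deduces $|R|=|{\rm PGU}_4(\mathbb{F}_{q^2})|/|{\rm PGU}_2(\mathbb{F}_{q^4})|$ from Theorem \ref{thm1} via orbit--stabilizer and then substitutes the orders quoted from \cite{HT}. No differences worth noting.
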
 
The number $|R|$ is $432$, $18144$, $249600$, $1890000$, $39645312$, $383162400,\dots$ as $q=2$, $3$, $4$, $5$, $7$, $9,\dots$.

In the special case where $A=I$, that is, where the surface $X_A$ is the Fermat surface, we can explicitly give an element $C_{F_J}$ of $R$ such as
\begin{equation*}
\left.\left\{{}^\mathrm t\hspace{-1.0mm}\left(\eta^{-q}\xi^qs^{q+1}-\eta^{-q}t^{q+1},\ s^qt,\ st^q,\ \omega\eta^{-1}\xi s^{q+1}+\omega\eta^{-1}t^{q+1}\right)\in\mathbb{P}^3\right|{}^\mathrm t\hspace{-0.5mm}(s,t)\in\mathbb{P}^1\right\},
\end{equation*}
where
$\omega$, $\xi$, and $\eta$ are elements of $\mathbb{F}_{q^2}$ satisfying $\omega^{q+1}=-1$, $\xi^{q+1}=1$ with $\xi^2
\neq-1$, and $\eta^{q+1}=\xi^q+\xi$. Note that $\eta\neq0$ because $\xi^2\neq0$, $-1$.
The curve $C_{F_J}$ is smooth since it is projectively isomorphic to the smooth curve $C_0$.
On the other hand, a complete set of representatives
for ${\rm Aut}(X_I)$ can be taken from ${\rm GL}_4(\mathbb{F}_{q^2})$
(see Lemma \ref{autxi}).
Therefore we have the following.
\begin{cor}\label{cor2}
All nonplanar rational curves of degree $q+1$ on $X_I$ are projectively isomorphic over $\mathbb{F}_{q^2}$ to the smooth curve $C_{F_J}$.
\end{cor}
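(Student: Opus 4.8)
The plan is to derive Corollary \ref{cor2} directly from Theorem \ref{thm1} together with Lemma \ref{autxi}, so that the only genuine work is an arithmetic check on the explicit curve. First I would confirm that the curve $C_{F_J}$ written above actually belongs to $R$. It is stated to be projectively isomorphic to $C_0$, hence it is smooth, nonplanar, and of degree $q+1$; thus it remains only to verify that $C_{F_J}$ lies on $X_I$. This amounts to substituting its four coordinate functions into the Fermat equation ${}^\mathrm t\hspace{-0.5mm}\bm{x}\,\bm{x}^{(q)}=0$ and checking that the resulting polynomial in $s,t$ vanishes identically; the relations $\omega^{q+1}=-1$, $\xi^{q+1}=1$, and $\eta^{q+1}=\xi^q+\xi$ are arranged precisely so that, after taking $q$-th powers, the mixed monomials cancel against the squared terms. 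Once $C_{F_J}\in R$ is established, it serves as a base point for the group action.

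Next I would invoke the transitivity asserted in Theorem \ref{thm1}. For an arbitrary element $C\in R$, transitivity of the action of ${\rm Aut}(X_I)$ on $R$ supplies an automorphism $g\in{\rm Aut}(X_I)$ with $g(C_{F_J})=C$. The decisive point is then Lemma \ref{autxi}: a complete set of representatives for ${\rm Aut}(X_I)$ can be chosen inside ${\rm GL}_4(\mathbb{F}_{q^2})$, so $g$ is induced by a matrix with entries in $\mathbb{F}_{q^2}$, that is, $g\in{\rm PGL}_4(\mathbb{F}_{q^2})$. Consequently $C$ and $C_{F_J}$ are carried onto one another by a projective transformation defined over $\mathbb{F}_{q^2}$, which is exactly the claim that they are projectively isomorphic over $\mathbb{F}_{q^2}$. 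As $C\in R$ was arbitrary, every nonplanar rational curve of degree $q+1$ on $X_I$ is $\mathbb{F}_{q^2}$-projectively isomorphic to $C_{F_J}$.

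The conceptual content is slight, because Theorem \ref{thm1} already furnishes transitivity; the refinement from an isomorphism over $k$ to one over $\mathbb{F}_{q^2}$ rests entirely on the arithmetic statement of Lemma \ref{autxi} that, for the Fermat surface specifically, the full automorphism group is realized within ${\rm PGL}_4(\mathbb{F}_{q^2})$ rather than merely within ${\rm PGL}_4(k)$. I therefore expect the main obstacle to be purely computational, namely the verification that $C_{F_J}$ lies on $X_I$, where one must keep careful track of the $q$-power twists and exploit the defining relations among $\omega$, $\xi$, and $\eta$ to force every cross term to vanish.
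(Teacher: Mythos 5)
Your proposal is correct and follows essentially the same route as the paper: exhibit the explicit curve $C_{F_J}\in R$ defined over $\mathbb{F}_{q^2}$, apply the transitivity of ${\rm Aut}(X_I)$ from Theorem \ref{thm1}, and use Lemma \ref{autxi} to realize every automorphism class by a matrix in ${\rm GL}_4(\mathbb{F}_{q^2})$. The only cosmetic difference is that you verify $C_{F_J}\subset X_I$ by direct substitution into the Fermat equation, whereas the paper checks the equivalent matrix identity ${}^\mathrm t\hspace{-0.5mm}{F_J}^*{{F_J}^*}^{(q)}=D_J$ and invokes the bijection \eqref{bi2}.
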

In the case where $q=2$, we have $|X_I(\mathbb{F}_{q^2})|=45$ where $X_I(\mathbb{F}_{q^2})$ denotes the set of $\mathbb{F}_{q^2}$-rational points of $X_I$,  and ${\rm Aut}(X_I)$ is of order $25920$. Then $|C_F(\mathbb{F}_{q^2})|=5$ for each nonplanar cubic $C_F$ on $X_I$. We can actually obtain by computation $432$ nonplanar cubics on $X_I$ and the stabilizer subgroup of ${\rm Aut}(X_I)$ fixing $C_{F_J}$ of order $60$. By restricting  $X_I$ to $X_I(\mathbb{F}_{q^2})$, we can verify that each cubic intersects $150$ other cubics at a single point, $40$ other cubics at two points and another cubic at five points. Here, when we say two cubics $C_F$, $C_{F'}$ intersect at $n$ points we mean $|C_F(\mathbb{F}_{q^2})\cap C_{F'}(\mathbb{F}_{q^2})|=n$. We can also verify that ${\rm Aut}(X_I)$ acts transitively on $X_I(\mathbb{F}_{q^2})$ and the stabilizer subgroup is of order $576$, and furthermore, there are $48$ cubics passing through each point of $X_I(\mathbb{F}_{q^2})$. These computational data files obtained by using {\tt GAP} \cite{GAP} are available upon request addressed to the author.

We give a brief outline of our paper. In the next section, we prove Theorem \ref{thm0}. By the same argument, we show directly
that each irreducible conic, which is a rational curve of rank $3$, is not contained in $X_A$. In section $3$, we give a bijection between the set $R$ and the quotient of certain sets consisting of invertible $4$-by-$4$ matrices, by showing basic lemmas. In section $4$, we first prove two lemmas which are necessary for our proof of Theorem \ref{thm1}. We prove Theorem \ref{thm1} in the last of the section.

The author is grateful to
Professor Ichiro Shimada for his encouragement during the course of the work and
helpful suggestions on drafts.

\section{Proof of Theorem \ref{thm0}}
\begin{proof}[Proof of Theorem $\ref{thm0}$]
Suppose that a nonplanar rational curve $C_F$ defined by \eqref{lem1} is contained in a smooth $k$-Hermitian surface $X_A$. Denoting by $b_{i,j}$ the entries of the $(d+1)$-by-$(d+1)$ matrix ${}^\mathrm t\hspace{-0.5mm} FAF^{(q)}$, one has the identity
\begin{equation}
\sum_{i,j=0}^{d}b_{i,j}s^{d-i+q(d-j)}t^{i+qj}\equiv0.\label{iden}
\end{equation}
Therefore if $d<q$, all the coefficients $b_{i,j}$ must vanish because the exponents $(i+qj)$'s are all different. This implies that ${}^\mathrm t\hspace{-0.5mm} FAF^{(q)}=O$, but it is a contradiction.
In fact, since ${\rm rank}(F)=4$ by definition, we can take an invertible matrix $F^*$ consisting of linearly independent $4$ column vectors of $F$. Then, however, ${}^\mathrm t\hspace{-0.5mm} F^*A{F^*}^{(q)}$ must be $O$.
If $d=q$, the coefficients $b_{i,j}$ must vanish except for $b_{q,l-1}=-b_{0,l}$ with $1\le l\le q$. This implies that ${\rm rank}({}^\mathrm t\hspace{-0.5mm} FAF^{(q)})\le2$, but it is a contradiction by the argument above. Hence we conclude that
$C_F\not\subset X_A$.

\end{proof}
\begin{rem}
We can similarly give a proof for the case of irreducible conics.
In fact, since an irreducible conic $C_F$ is of rank $3$, we can make an invertible matrix $F^*$ consisting of linearly independent $3$ column vectors of $F$ and a vector linearly independent to those vectors. Suppose that $C_F\subset X_A$. Since $d=2\le q$, one has ${\rm rank}({}^\mathrm t\hspace{-0.5mm} FAF^{(q)})\le2$ in the same argument as the above proof. Therefore the $4$-by-$4$ matrix ${}^\mathrm t\hspace{-0.5mm} {F^*}A{F^*}^{(q)}$ must be of rank $3$ at the most, but ${}^\mathrm t\hspace{-0.5mm} {F^*}A{F^*}^{(q)}$ is of rank $4$ by definition. This is a contradiction. As we have seen, this proof is valid for rational curves which are of rank $\ge3$ and degree $\le q$.
\end{rem}

\section{Basic lemmas}
In this section, we will prove some basic lemmas to prepare for our proof of Theorem \ref{thm1}.
The following lemma gives a necessary and sufficient condition for a nonplanar rational curve of degree $q+1$ to be on a smooth $k$-Hermitian surface.
\begin{lem}\label{lem2}
Let $C_F$ be a nonplanar rational curve of degree $q+1$ defined by \eqref{lem1}.
The curve $C_F$ is contained in a smooth $k$-Hermitian surface $X_A$ if and only if the $(q+2)$-by-$(q+2)$ matrix ${}^\mathrm t\hspace{-0.5mm} FAF^{(q)}$ is of the form
\begin{equation*}
\begin{pmatrix}
0&b_{0,1}&0,\dots,0&0&b_{0,q+1}\\0&b_{1,1}&0,\dots,0&0&b_{1,q+1}\\0&0&0,\dots,0&0&0\\\vdots&\vdots&\vdots&\vdots&\vdots\\0&0&0,\dots,0&0&0\\-b_{0,1}&0&0,\dots,0&-b_{0,q+1}&0\\-b_{1,1}&0&0,\dots,0&-b_{1,q+1}&0
\end{pmatrix}.
\end{equation*}
If the above condition is satisfied, the matrix $F$ is of the form
\begin{equation*}
(\bm{f}_0,\bm{f}_1,\bm{0},\dots,\bm{0},\bm{f}_{q},\bm{f}_{q+1}).
\end{equation*}
\end{lem}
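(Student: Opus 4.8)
The plan is to translate the containment $C_F\subseteq X_A$ into the polynomial identity \eqref{iden} and then read it off monomial by monomial. Setting $\bm v={}^{\mathrm t}(s^{q+1},s^qt,\dots,t^{q+1})$ and $B={}^{\mathrm t}FAF^{(q)}=(b_{i,j})$, the curve lies on $X_A$ exactly when ${}^{\mathrm t}\bm v\,B\,\bm v^{(q)}\equiv 0$; since the $(i,j)$-term contributes the monomial $s^{(q+1)^2-(i+qj)}t^{\,i+qj}$, this is equivalent to $\sum_{i+qj=e}b_{i,j}=0$ for every integer $e$. First I would pin down which pairs $(i,j)$ share a value of $i+qj$: from $i-i'=q(j'-j)$ together with $0\le i,i'\le q+1$ one finds that a coincidence forces $j'-j=\pm1$ and $i-i'=\mp q$, so the only collisions are $\{(q,j),(0,j+1)\}$ and $\{(q+1,j),(1,j+1)\}$ for $0\le j\le q$, every other exponent being attained just once.

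This bookkeeping gives what I will call the weak form of $B$: all entries with a unique exponent vanish, while the colliding pairs impose $b_{q,j}=-b_{0,j+1}$ and $b_{q+1,j}=-b_{1,j+1}$. In particular, every entry whose row index $i$ satisfies $2\le i\le q-1$ has a unique exponent (its row is never part of a colliding pair), so these rows of $B$ are identically zero. Here I would bring in the standing hypotheses: for such $i$ the vanishing of the $i$-th row of $B={}^{\mathrm t}FAF^{(q)}$ says ${}^{\mathrm t}\bm f_i A F^{(q)}=\bm 0$, and because $A$ is invertible and the columns of $F^{(q)}$ span $k^4$ (as $\operatorname{rank}F=4$) this yields ${}^{\mathrm t}\bm f_iA=\bm 0$, hence $\bm f_i=\bm 0$. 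This already establishes the asserted shape $F=(\bm f_0,\bm f_1,\bm 0,\dots,\bm 0,\bm f_q,\bm f_{q+1})$, whose four nonzero columns are then linearly independent.

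The decisive step is to feed this back into $B$. Once $\bm f_i=\bm 0$ for $2\le i\le q-1$, one has $b_{i,j}=b_{j,i}=0$ whenever $i$ or $j$ lies in that range, and combined with the weak-form relations this removes every corner entry outside columns $0,1,q,q+1$: for example $b_{0,j}=0$ for $2\le j\le q-1$ directly, while $b_{0,q}=-b_{q,q-1}=0$ because $\bm f_{q-1}=\bm 0$, and symmetrically for row $1$ and for the rows indexed $q,q+1$. What remains is exactly the displayed matrix, with free entries $b_{0,1},b_{0,q+1},b_{1,1},b_{1,q+1}$ and their sign-paired counterparts. The converse is immediate, since a matrix of the displayed form satisfies $\sum_{i+qj=e}b_{i,j}=0$ for every $e$ by the sign pattern and hence yields \eqref{iden}. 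I expect the main obstacle to be precisely this sharpening: the identity alone delivers only the weak form, and it is the reuse of $\operatorname{rank}F=4$ through the vanishing columns that upgrades it to the stated normal form; I would also check the small cases where the index range $\{2,\dots,q-1\}$ degenerates separately.
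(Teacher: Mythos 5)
Your argument is, in substance, the paper's own: derive the identity \eqref{iden}, classify the colliding exponents, conclude that rows $2,\dots,q-1$ of $B={}^{\mathrm t}FAF^{(q)}$ vanish, deduce $\bm f_i=\bm 0$ for those $i$ from $\operatorname{rank}F=4$, and feed the resulting zero columns back into the pairing relations $b_{q,j}=-b_{0,j+1}$, $b_{q+1,j}=-b_{1,j+1}$ to kill $b_{0,j}$ and $b_{1,j}$ for $2\le j\le q$. Your route to $\bm f_i=\bm 0$ (the left kernel of $AF^{(q)}$ is trivial because $AF^{(q)}$ has full row rank) is slightly more direct than the paper's, which first locates four independent columns of $F$, argues they must sit at positions $0,1,q,q+1$, and then inverts ${}^{\mathrm t}F^{*}A{F^{*}}^{(q)}$; and you are more explicit than the paper about the feedback step. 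These are cosmetic differences.

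The loose end you flag at the end is, however, a genuine gap and not a formality --- and the paper's proof passes over it in silence. When $q=2$ the range $\{2,\dots,q-1\}$ is empty, so there are no vanishing rows or columns to feed back, and the identity yields only the ``weak form'': $b_{0,0}=b_{1,0}=b_{q,q+1}=b_{q+1,q+1}=0$ together with the pairing relations. The entries $b_{0,2}$ and $b_{1,2}$, which the displayed matrix requires to vanish, are then tied only to $b_{2,1}$ and $b_{3,1}$, and nothing forces them to be zero. This cannot be patched by a closer look at the same argument: for $q=2$ there are invertible $4\times 4$ matrices $D$ in the weak form with $b_{0,2}\neq 0$ (for instance, in characteristic $2$, take $b_{0,1}=b_{0,2}=b_{1,3}=1$ with the forced entries $b_{2,0}=b_{2,1}=b_{3,2}=1$ and all others zero; its determinant is $1$), and the same surjectivity argument used for the map \eqref{surjM} produces an invertible $F$ with ${}^{\mathrm t}FAF^{(q)}=D$, hence a nonplanar cubic $C_F\subset X_A$ whose matrix is not of the stated shape. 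So both your proof and the paper's establish the lemma exactly for $q\ge 3$; for $q=2$, where $X_A$ is a smooth cubic surface (and as such carries infinitely many twisted cubics), the statement itself needs to be modified rather than merely ``checked separately.''
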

\begin{proof}
As was seen above, the curve $C_F$ is contained in
$X_A$ if and only if one has \eqref{iden}.
In the present case where $d=q+1$, if $C_F\subset X_A$ then
the coefficients $b_{i,j}$ must vanish except for $b_{q,l-1}=-b_{0,l}$, $b_{q+1,l-1}=-b_{1,l}$ with $1\le l\le q+1$.
Since ${\rm rank}(F)=4$, there are $4$ column vectors $\bm{f}_x,\bm{f}_y,\bm{f}_z,\bm{f}_w$ of $F$ with $0\le x<y<z<w\le q+1$ such that the matrix $F^*:=(\bm{f}_x,\bm{f}_y,\bm{f}_z,\bm{f}_w)$ is invertible. Then none of $x,y,z,w$ is from $2$ to $q-1$ because ${}^\mathrm t\hspace{-0.5mm} F^{*}A{F^{*}}^{(q)}$ is also invertible, and thus $x=0,y=1,z=q,w=q+1$. Let $\bm{f}_i$ be the $i$-th column vector with $2\le i\le q-1$ of $F$. Then one has
\begin{equation*}
{}^\mathrm t\hspace{-0.5mm}\bm{f}_iA{F^{*}}^{(q)}=(b_{i,0},b_{i,1},b_{i,q},b_{i,q+1})=(0,0,0,0),
\end{equation*}
and thus $\bm{f}_i=\bm{0}$. Hence $F$ and ${}^\mathrm t\hspace{-0.5mm} FAF^{(q)}$ are of the form described above. The converse is obvious since \eqref{iden} holds automatically.

\end{proof}
A rational curve $C_F$ defined by \eqref{lem1} is also obtained by replacing $F$ by
$\lambda F\varphi(g)$, where $\lambda$ is an element of the multiplicative group $k^{\times}$
and $\varphi$ is a homomorphism from ${\rm GL}_2(k)$ to ${\rm GL}_{d+1}(k)$ defined by the following:
for each ${}^\mathrm t\hspace{-0.5mm}(s,t)\in k^2$ with ${}^\mathrm t\hspace{-0.5mm}(s,t)\neq{}^\mathrm t\hspace{-0.5mm}(0,0)$ and $g\in{\rm GL}_2(k)$, put ${}^\mathrm t\hspace{-0.5mm}(u,v):=g\ {}^\mathrm t\hspace{-0.5mm}(s,t)$, then
\begin{align*}
&\begin{array}{cccc}\varphi:&\hspace{-5mm}{\rm GL}_2(k)&\longrightarrow&{\rm GL}_{d+1}(k)\\
&\hspace{-5mm}\rotatebox{90}{$\in$}&&\rotatebox{90}{$\in$}\\
&\hspace{-5mm}\left(g:{}^\mathrm t\hspace{-0.5mm}(s,t)\mapsto{}^\mathrm t\hspace{-0.5mm}(u,v)\right)&\longmapsto&\left(\varphi(g):{}^\mathrm t\hspace{-0.5mm}(s^d,s^{d-1}t,\dots,t^d)\mapsto
{}^\mathrm t\hspace{-0.5mm}(u^d,u^{d-1}v,\dots,v^d)\right).
\end{array}
\end{align*}
Indeed, it is obvious by definition that $\varphi(I)=I$. Putting ${}^\mathrm t\hspace{-0.5mm}(x,y):=h\ {}^\mathrm t\hspace{-0.5mm}(u,v)$ for each $h\in{\rm GL}_2(k)$, one has
\begin{eqnarray*}
\varphi(hg)\ {}^\mathrm t\hspace{-0.5mm}(s^d,s^{d-1}t,\dots,t^d)&=&{}^\mathrm t\hspace{-0.5mm}({x}^d,{x}^{d-1}y,\dots,{y}^d)\\
&=&\varphi(h)\ {}^\mathrm t\hspace{-0.5mm}(u^d,u^{d-1}v,\dots,v^d)\\
&=&\varphi(h)\varphi(g)\ {}^\mathrm t\hspace{-0.5mm}(s^d,s^{d-1}t,\dots,t^d).
\end{eqnarray*}
Hence $\varphi(hg)=\varphi(h)\varphi(g)$, and thus $\varphi(g)\in{\rm GL}_{d+1}(k)$.

Conversely if there is a matrix $F'$ such that $C_F=C_{F'}$, then one has
\begin{equation*}
F\ {}^\mathrm t\hspace{-0.5mm}(s^d,s^{d-1}t,\dots,st^{d-1},t^d)=F'\ {}^\mathrm t\hspace{-0.5mm}(u^d,u^{d-1}v,\dots,uv^{d-1},v^d)\in\mathbb{P}^3.
\end{equation*}
This implies that there are homogeneous polynomials $f$, $f'$ of degree $d$ such that $f(s,t)=f'(u,v)$.
Therefore there is an element $g$ of ${\rm GL}_2(k)$ such that ${}^\mathrm t\hspace{-0.5mm}(s,t)=g\,{}^\mathrm t\hspace{-0.5mm}(u,v)\in\mathbb{P}^1$, and thus $F'=\lambda F\varphi(g)$ for some $\lambda\in k^{\times}$.
Hence, denoting by ${\rm Im}(\varphi)$ the image of $\varphi$, we see that the set $k^{\times}F{\rm Im}(\varphi)$ corresponds one-to-one with $C_F$.

Let $S$ be the set of matrices $F$ such that
${}^\mathrm t\hspace{-0.5mm} FAF^{(q)}$ satisfies the condition of Lemma \ref{lem2}.
Then by Lemma \ref{lem2}, for each $F\in S$ the set $k^{\times}F{\rm Im}(\varphi)$ corresponds one-to-one with the nonplanar rational curve $C_F$ on $X_A$.
Therefore one has the following bijection
\begin{equation}
k^{\times}\backslash S/{\rm Im}(\varphi)\ni k^{\times}F{\rm Im}(\varphi)\longmapsto C_{F}\in R.\label{bi1}
\end{equation}

By Lemma \ref{lem2}, we define the map
\begin{equation*}
^*:S\ni F=(\bm{f}_0,\bm{f}_1,\bm{0},\dots,\bm{0},\bm{f}_{q},\bm{f}_{q+1})\longmapsto F^*=(\bm{f}_0,\bm{f}_1,\bm{f}_{q},\bm{f}_{q+1})\in S^*,
\end{equation*}
where $S^*$ is written as
\begin{equation*}
S^*=\{F^*\in{\rm GL}_4(k)\ |\ {}^\mathrm t\hspace{-0.5mm} F^*A{F^*}^{(q)}=D_B,\ B\in{\rm GL}_2(k)\},
\end{equation*}
and $D_B$ is a matrix defined by
\begin{equation*}
D_B:=
\begin{pmatrix}
\bm{0}&\bm{b}_1&\bm{0}&\bm{b}_2\\
-\bm{b}_1&\bm{0}&-\bm{b}_2&\bm{0}
\end{pmatrix}
\in{\rm GL}_4(k)\ {\rm for}\ B=(\bm{b}_1,\bm{b}_2)\in{\rm GL}_2(k).
\end{equation*}

Further, we define the map $_*$ from ${\rm Im}(\varphi)\subset{\rm GL}_{q+2}(k)$ to ${{\rm Im}(\varphi)}_*\subset{\rm GL}_4(k)$
as follows:
\begin{align*}
&{\rm for}\ {\rm every}\ g=\begin{pmatrix}\alpha&\beta\\\gamma&\delta\end{pmatrix}\in{\rm GL}_2(k),\\
&\varphi(g)\hspace{-1mm}=\hspace{-1mm}
\begin{pmatrix}
\alpha^{q+1}&\alpha^q\beta&,\dots,&\alpha\beta^q&\beta^{q+1}\\
\alpha^q\gamma&\alpha^q\delta&,\dots,&\gamma\beta^q&\delta\beta^q\\
\vdots&\vdots&\vdots&\vdots&\vdots\\
\alpha\gamma^q&\beta\gamma^q&,\dots,&\alpha\delta^q&\beta\delta^q\\
\gamma^{q+1}&\delta\gamma^q&,\dots,&\gamma\delta^q&\delta^{q+1}
\end{pmatrix}
\mapsto
\varphi(g)_*\hspace{-1mm}=\hspace{-1mm}
\begin{pmatrix}
\alpha^{q+1}&\alpha^q\beta&\alpha\beta^q&\beta^{q+1}\\
\alpha^q\gamma&\alpha^q\delta&\gamma\beta^q&\delta\beta^q\\
\alpha\gamma^q&\beta\gamma^q&\alpha\delta^q&\beta\delta^q\\
\gamma^{q+1}&\delta\gamma^q&\gamma\delta^q&\delta^{q+1}
\end{pmatrix},
\end{align*}
where ${{\rm Im}(\varphi)}_*$ is written as
\begin{equation*}
{{\rm Im}(\varphi)}_*=
\left\{\left.
\begin{pmatrix}
\alpha^{q}g&\beta^{q}g\\
\gamma^{q}g&\delta^{q}g\\
\end{pmatrix}
\in{\rm GL}_4(k)\ \right|\ 
g\in{\rm GL}_2(k)
\right\}.
\end{equation*}
Indeed, it is easy to see that
${\rm det}(\varphi(g)_*)={\rm det}(g)^{2q+2}$
for every $g\in{\rm GL}_2(k)$, and thus $\varphi(g)_*\in{\rm GL}_4(k)$.

We denote by $\varphi_*$ the composition of $\varphi$ and $_*$, namely $\varphi_*(g)=\varphi(g)_*$ for every $g\in{\rm GL}_2(k)$.
\begin{lem}\label{lem3}
The map $\varphi_*$
is a homomorphism from ${\rm GL}_2(k)$ to ${\rm GL}_4(k)$.
There is the following natural bijection
\begin{equation*}
k^{\times}\backslash S/{\rm Im}(\varphi)\longrightarrow
k^{\times}\backslash S^*/{{\rm Im}(\varphi)}_*.
\end{equation*}
\end{lem}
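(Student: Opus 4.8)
The plan is to prove the two assertions in turn, using throughout the fact that $\varphi_*(g)$ is exactly the $4$-by-$4$ submatrix of $\varphi(g)$ indexed by the rows and columns $0,1,q,q+1$, that is, by the indices surviving under the map ${}^*$. For the homomorphism property I would first rewrite $\varphi_*(g)$ in block form: setting $g=\begin{pmatrix}\alpha&\beta\\\gamma&\delta\end{pmatrix}$, the displayed formula reads
\begin{equation*}
\varphi_*(g)=\begin{pmatrix}\alpha^q g&\beta^q g\\\gamma^q g&\delta^q g\end{pmatrix},
\end{equation*}
which is the expression appearing in the description of ${{\rm Im}(\varphi)}_*$. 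Writing $h=\begin{pmatrix}a&b\\c&d\end{pmatrix}$ and multiplying in these $2$-by-$2$ blocks, each block of $\varphi_*(h)\varphi_*(g)$ collapses, by the additivity of the Frobenius, for instance via $a^q\alpha^q+b^q\gamma^q=(a\alpha+b\gamma)^q$, into the corresponding block of $\varphi_*(hg)$; hence $\varphi_*(hg)=\varphi_*(h)\varphi_*(g)$. Since ${\rm det}(\varphi_*(g))={\rm det}(g)^{2q+2}\neq0$, the image lies in ${\rm GL}_4(k)$, so $\varphi_*$ is a homomorphism and ${{\rm Im}(\varphi)}_*$ is a subgroup.

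For the bijection I would first check that the map ${}^*\colon S\to S^*$ is itself a bijection. By Lemma \ref{lem2} every $F\in S$ has all columns other than those indexed $0,1,q,q+1$ equal to $\bm{0}$, so $F$ is recovered from $F^*$ by reinserting the zero columns; and because the entries of ${}^\mathrm t\hspace{-0.5mm} FAF^{(q)}$ outside these rows and columns already vanish, the $4$-by-$4$ block at positions $0,1,q,q+1$ is exactly ${}^\mathrm t\hspace{-0.5mm} F^*A{F^*}^{(q)}=D_B$, giving $F^*\in S^*$ and conversely. I would then record the equivariance $(\lambda F)^*=\lambda F^*$ and $(F\varphi(g))^*=F^*\varphi_*(g)$ for $\lambda\in k^\times$ and $g\in{\rm GL}_2(k)$. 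The scalar identity is immediate; for the second, since $C_{F\varphi(g)}=C_F\subset X_A$, Lemma \ref{lem2} shows that $F\varphi(g)$ again lies in $S$, so ${}^*$ applies, and on comparing columns the column of $(F\varphi(g))^*$ at an index $k\in\{0,1,q,q+1\}$ equals $\sum_{r}\varphi(g)_{r,k}\bm{f}_r$, in which only $r\in\{0,1,q,q+1\}$ contribute because the remaining $\bm{f}_r$ vanish; as $\varphi_*(g)$ agrees with $\varphi(g)$ on these rows and columns, this is precisely the column of $F^*\varphi_*(g)$ at index $k$.

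Finally, since ${}^*$ is a bijection that intertwines the two $k^\times$-actions and carries the right ${\rm Im}(\varphi)$-action to the right ${{\rm Im}(\varphi)}_*$-action, it descends to the asserted bijection $k^\times\backslash S/{\rm Im}(\varphi)\to k^\times\backslash S^*/{{\rm Im}(\varphi)}_*$. I expect the one genuinely delicate point to be this equivariance identity, and in particular its well-definedness: one must know that $F\varphi(g)$ stays in $S$, which I would obtain from the geometric content of Lemma \ref{lem2}, namely that reparametrization leaves $C_F$ unchanged, rather than from a direct congruence computation with ${}^\mathrm t\hspace{-0.5mm}\varphi(g)\,({}^\mathrm t\hspace{-0.5mm} FAF^{(q)})\,\varphi(g)^{(q)}$.
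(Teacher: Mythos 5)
Your proposal is correct and follows essentially the same route as the paper: the homomorphism property via block multiplication of $\begin{pmatrix}\alpha^q g&\beta^q g\\\gamma^q g&\delta^q g\end{pmatrix}$ and Frobenius additivity, and the bijection via the column identity $(F\varphi(g))^*=F^*\varphi_*(g)$ (using that the columns $\bm{f}_i$ with $2\le i\le q-1$ vanish) together with the bijectivity of ${}^*\colon S\to S^*$. The extra care you take over well-definedness (that $F\varphi(g)$ stays in $S$, and the scalar equivariance) is a small explicit addition to what the paper leaves implicit, not a different argument.
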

\begin{proof}
For each 
\begin{equation*}
g=\begin{pmatrix}\alpha&\beta\\\gamma&\delta\end{pmatrix},\ h=\begin{pmatrix}x&y\\z&w\end{pmatrix}\in{\rm GL}_2(k),
\end{equation*}
one has
\begin{equation*}
gh=\begin{pmatrix}\alpha x+\beta z&\alpha y+\beta w\\\gamma x+\delta z&\gamma y+\delta w\end{pmatrix}.
\end{equation*}
Therefore
\begin{equation*}
\varphi_*(gh)=
\begin{pmatrix}(\alpha x+\beta z)^qgh&(\alpha y+\beta w)^qgh\\(\gamma x+\delta z)^qgh&(\gamma y+\delta w)^qgh
\end{pmatrix}.
\end{equation*}
On the other hand,
\begin{eqnarray*}
\varphi_*(g)\varphi_*(h)&=&
\begin{pmatrix}\alpha^q g&\beta^q g\\\gamma^q g&\delta^q g\end{pmatrix}\begin{pmatrix}x^qh&y^qh\\z^qh&w^qh\end{pmatrix}\\
&=&\begin{pmatrix}\alpha^qx^qgh+\beta^qz^qgh&\alpha^qy^qgh+\beta^qw^qgh\\\gamma^qx^qgh+\delta^qz^qgh&\gamma^qy^qgh+\delta^qw^qgh \end{pmatrix}\\
&=&\begin{pmatrix}(\alpha^q{x}^q+\beta^q{z}^q)gh&(\alpha^q{y}^q+\beta^q{w}^q)gh\\(\gamma^q{x}^q+\delta^q{z}^q)gh&(\gamma^q{y}^q+\delta^q{w}^q)gh
\end{pmatrix}.
\end{eqnarray*}
Since the $q$-th power is an automorphism of $k$, one has $\varphi_*(gh)=\varphi_*(g)\varphi_*(h)$ and thus $\varphi_*$ is a homomorphism from ${\rm GL}_2(k)$ to ${\rm GL}_4(k)$.

For each $F\in S$,\ $g\in{\rm GL}_2(k)$, denoting by $a_{i,j}$ the entries of $\varphi(g)$, we can write the $j$-th column vector $\bm{g}_j$ with $j\in\{0,1,q,q+1\}$ of $F\varphi(g)$ as
\begin{equation*}
\bm{g}_j=\sum_{i\in\{0,1,q,q+1\}}a_{i,j}\bm{f}_i,
\end{equation*}
since $\bm{f}_i=\bm{0}$ for $2\le i\le q-1$. Then it is immediate from definition that
\begin{equation*}
F^*\varphi_*(g)=(\bm{g}_0,\bm{g}_1,\bm{g}_{q},\bm{g}_{q+1}),
\end{equation*}
and thus ${(F\varphi(g))}^*=F^*\varphi_*(g)$. This implies that there is the natural map from $k^{\times}\backslash S/{\rm Im}(\varphi)$ to $k^{\times}\backslash S^*/{{\rm Im}(\varphi)}_*$.
The bijectivity is obvious
since by definition the map $S\rightarrow S^*$ is bijective.

\end{proof}
By \eqref{bi1} and Lemma \ref{lem3}, one has the
bijection
\begin{equation}
k^{\times}\backslash S^*/{{\rm Im}(\varphi)}_*\ni k^{\times}F^*{{\rm Im}(\varphi)}_*\longmapsto C_F\in R.\label{bi2}
\end{equation}

The following well-known proposition is useful. The readers may find a proof for example in \cite{Be} and \cite[Proposition 2.5.]{Sh1}.
\begin{prop}\label{prop1}
For each element $A$ of ${\rm GL}_n(k)$, there is an element $B$ of ${\rm GL}_n(k)$ such that $A={}^\mathrm t\hspace{-0.5mm} BB^{(q)}$. If $A$ is a Hermitian matrix, then the matrix $B$ can be taken from ${\rm GL}_n(\mathbb{F}_{q^2})$.
\end{prop}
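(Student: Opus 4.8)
The plan is to prove the statement by induction on $n$, mimicking the classical argument that every nondegenerate Hermitian form over a finite field admits an orthonormal basis, but now working over the algebraically closed field $k$ where the existence of roots makes the base step essentially free. The key observation is that the equation $A = {}^\mathrm t\hspace{-0.5mm} BB^{(q)}$ expresses $A$ as the Gram matrix, with respect to the standard basis, of the sesquilinear pairing $\langle \bm{x},\bm{y}\rangle := {}^\mathrm t\hspace{-0.5mm}\bm{x}\,A\,\bm{y}^{(q)}$, and that replacing $A$ by ${}^\mathrm t\hspace{-0.5mm} P A P^{(q)}$ for $P\in{\rm GL}_n(k)$ corresponds to a change of basis. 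So the whole proposition amounts to finding a basis in which this pairing has Gram matrix $I$; then $B$ is (the inverse of) the change-of-basis matrix.

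First I would set up the induction. For $n=1$ the claim reads $A=(a)$ with $a\in k^\times$, and I must solve $a = b\cdot b^q = b^{q+1}$; since $k$ is algebraically closed the polynomial $T^{q+1}-a$ has a root, so $b$ exists. For the inductive step, I would pick a vector $\bm{v}_1$ with $\langle\bm{v}_1,\bm{v}_1\rangle\neq 0$ — here one must first check that such an \emph{anisotropic} vector exists, which follows because $A$ is invertible so the pairing is nondegenerate, and a pairing that vanished on every diagonal would force (after polarizing) the whole form to degenerate. Rescaling $\bm{v}_1$ by a suitable $(q+1)$-th root as in the base case, I may assume $\langle\bm{v}_1,\bm{v}_1\rangle = 1$. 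Then I would pass to the orthogonal complement $W := \{\bm{w} : \langle\bm{v}_1,\bm{w}\rangle = 0\}$, which is an $(n-1)$-dimensional subspace on which the restricted pairing is again nondegenerate, apply the induction hypothesis there to obtain an orthonormal basis $\bm{v}_2,\dots,\bm{v}_n$ of $W$, and collect $\bm{v}_1,\dots,\bm{v}_n$ into the columns of a matrix $P$ satisfying ${}^\mathrm t\hspace{-0.5mm} P A P^{(q)} = I$; then $B := P^{-1}$ does the job after transposing conventions.

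The hard part, and the step I would be most careful about, is the second sentence: when $A$ is Hermitian I must arrange $B\in{\rm GL}_n(\mathbb{F}_{q^2})$. This forces me to redo the induction entirely over $\mathbb{F}_{q^2}$ rather than over $k$, where two new difficulties appear. One is that solving $b^{q+1}=a$ with $a\in\mathbb{F}_{q^2}^\times$ and $b\in\mathbb{F}_{q^2}^\times$ requires that $a$ lie in the image of the norm map $N:\mathbb{F}_{q^2}^\times\to\mathbb{F}_q^\times$, $b\mapsto b^{q+1}$; one checks that the diagonal values $\langle\bm{v},\bm{v}\rangle$ of a Hermitian form are automatically fixed by the $q$-power Frobenius, hence lie in $\mathbb{F}_q$, and the norm map is surjective onto $\mathbb{F}_q^\times$, so the needed root exists in $\mathbb{F}_{q^2}$. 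The other is that the anisotropic vector $\bm{v}_1$ must itself be chosen over $\mathbb{F}_{q^2}$; this is where the combinatorics of Hermitian forms over finite fields enters, and the cleanest route is to invoke the standard structure theory guaranteeing that a nondegenerate Hermitian space over $\mathbb{F}_{q^2}$ is the orthogonal sum of nondegenerate lines. Since this finite-field refinement is exactly the content cited to \cite{Be} and \cite[Proposition 2.5.]{Sh1}, I would state it and reference those sources rather than reprove the finite-field orthogonalization from scratch.
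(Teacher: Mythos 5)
First, a point of comparison: the paper does not prove Proposition \ref{prop1} at all --- it is stated as well known and the proof is delegated to \cite{Be} and \cite[Proposition 2.5]{Sh1}. So your attempt has to be judged on its own merits, against the arguments in those references.

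Your outline has a genuine gap in the first (and harder) assertion, the one for an \emph{arbitrary} $A\in{\rm GL}_n(k)$. The Gram--Schmidt induction you describe needs the pairing $\langle \bm{x},\bm{y}\rangle={}^{\mathrm t}\bm{x}A\bm{y}^{(q)}$ to be conjugate-symmetric, i.e.\ $\langle \bm{y},\bm{x}\rangle=\langle \bm{x},\bm{y}\rangle^{q}$ on the relevant vectors; that is exactly the Hermitian condition and it fails for general invertible $A$ (and even for $A=I$ it fails at vectors not defined over $\mathbb{F}_{q^2}$). Concretely, after normalizing $\langle\bm{v}_1,\bm{v}_1\rangle=1$, the left complement $\{\bm{w}:\langle\bm{w},\bm{v}_1\rangle=0\}$ and the right complement $\{\bm{w}:\langle\bm{v}_1,\bm{w}\rangle=0\}$ are in general two \emph{different} hyperplanes; a basis adapted to either one only brings the Gram matrix to block-triangular form $\left(\begin{smallmatrix}1&0\\ \ast&A'\end{smallmatrix}\right)$, and no matrix of the form ${}^{\mathrm t}BB^{(q)}$ with block-triangular $B$ has that shape unless the $\ast$ vanishes, because every row operation by ${}^{\mathrm t}P$ is forced to be accompanied by the $q$-twisted column operation $P^{(q)}$, which reintroduces a block on the other side. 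So the induction does not close. This is not a technicality: the general statement is a Lang--Steinberg-type theorem, and the cited proofs run along those lines --- for each $A$ the morphism $B\mapsto{}^{\mathrm t}BAB^{(q)}$ on ${\rm GL}_n$ has derivative $X\mapsto{}^{\mathrm t}XA$ at $B=I$ (the Frobenius factor contributes zero), hence is dominant; two images each containing a dense open set must meet, so all invertible matrices are equivalent under $A\mapsto{}^{\mathrm t}PAP^{(q)}$, in particular equivalent to $I$. Your treatment of the second assertion (Hermitian $A$, with $B$ over $\mathbb{F}_{q^2}$) is sound in outline, since there the form genuinely is conjugate-symmetric, the diagonal values lie in $\mathbb{F}_q$, and the norm $\mathbb{F}_{q^2}^{\times}\to\mathbb{F}_q^{\times}$ is surjective; but since you end by citing \cite{Be} and \cite[Proposition 2.5]{Sh1} for precisely the orthogonalization you set out to establish, the proposal as written does not amount to an independent proof of either half.
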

By Proposition \ref{prop1}, it follows immediately that a smooth $k$-Hermitian (resp. Hermitian) surface is projectively isomorphic over $k$ (resp. $\mathbb{F}_{q^2}$) to the Fermat surface $X_I$.

We define the set
\begin{equation*}
M:=\left\{\left.
D_B:=
\begin{pmatrix}
\bm{0}&\bm{b}_1&\bm{0}&\bm{b}_2\\
-\bm{b}_1&\bm{0}&-\bm{b}_2&\bm{0}
\end{pmatrix}
\in{\rm GL}_4(k)\right|
B=
\begin{pmatrix}
\bm{b}_1&\bm{b}_2
\end{pmatrix}
\in{\rm GL}_2(k)
\right\}.
\end{equation*}
Then the following map is surjective:
\begin{equation}
S^*\ni F^*\longmapsto {}^\mathrm t\hspace{-0.5mm} F^*A{F^*}^{(q)}\in M.\label{surjM}
\end{equation}
In fact, by Proposition \ref{prop1} there is an element $D$ of ${\rm GL}_4(k)$ such that $D_B={}^\mathrm t\hspace{-0.5mm} DD^{(q)}$ for each $D_B\in M$. Similarly there is an element $A'$ of ${\rm GL}_4(k)$ such that $A={}^\mathrm t\hspace{-0.5mm} A'A'^{(q)}$. Hence putting $F^*:=A'^{-1}D$, one has ${}^\mathrm t\hspace{-0.5mm} F^*A{F^*}^{(q)}=D_B$, and thus $F^*\in S^*$.

\begin{lem}\label{rc0}
The set $R$ is nonempty, and each element of $R$ is projectively isomorphic over $k$ to the smooth curve 
\begin{equation*}
C_0:=\left\{{}^\mathrm t\hspace{-0.5mm}(s^{q+1},s^qt,st^q,t^{q+1})\in\mathbb{P}^3\ |\ {}^\mathrm t\hspace{-0.5mm}(s,t)\in\mathbb{P}^1\right\}.
\end{equation*} 
\end{lem}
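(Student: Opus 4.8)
The plan is to combine the bijection \eqref{bi2} with the surjectivity of \eqref{surjM} for the existence half, and then to exhibit an explicit projective transformation carrying an arbitrary member of $R$ onto $C_0$ for the classification half.

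First I would settle nonemptiness. The set $M$ is manifestly nonempty: taking $B=I$ produces the signed permutation matrix $D_I\in M\subset{\rm GL}_4(k)$. Since the map \eqref{surjM} is surjective, there exists $F^*\in S^*$ with ${}^\mathrm t\hspace{-0.5mm} F^*A{F^*}^{(q)}=D_I$; because the map $S\to S^*$ is a bijection (established in the proof of Lemma \ref{lem3}), this $F^*$ lifts to some $F\in S$, and the bijection \eqref{bi2} then yields $C_F\in R$. Hence $R\neq\emptyset$.

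Next I would prove that every $C_F\in R$ is projectively isomorphic over $k$ to $C_0$. By \eqref{bi1} each element of $R$ is $C_F$ for some $F=(\bm{f}_0,\bm{f}_1,\bm{0},\dots,\bm{0},\bm{f}_q,\bm{f}_{q+1})\in S$, and by Lemma \ref{lem2} the matrix $F^*=(\bm{f}_0,\bm{f}_1,\bm{f}_q,\bm{f}_{q+1})$ lies in ${\rm GL}_4(k)$. Applying the projective transformation $(F^*)^{-1}\in{\rm GL}_4(k)$ to $\mathbb{P}^3$ sends $C_F$ to $C_{(F^*)^{-1}F}$, and a column-by-column computation (using $(F^*)^{-1}\bm{f}_0=\bm{e}_0$, $(F^*)^{-1}\bm{f}_1=\bm{e}_1$, $(F^*)^{-1}\bm{f}_q=\bm{e}_2$, $(F^*)^{-1}\bm{f}_{q+1}=\bm{e}_3$, and $(F^*)^{-1}\bm{0}=\bm{0}$) gives $(F^*)^{-1}F=(\bm{e}_0,\bm{e}_1,\bm{0},\dots,\bm{0},\bm{e}_2,\bm{e}_3)=:F_0$, where $\bm{e}_0,\dots,\bm{e}_3$ denote the standard basis of $k^4$. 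Since the nonzero columns of $F_0$ sit in positions $0,1,q,q+1$, one reads off $C_{F_0}=C_0$ directly from \eqref{lem1}. Thus $C_F$ and $C_0$ are projectively isomorphic over $k$.

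Finally I would verify that $C_0$ is smooth. On the chart $s=1$ the parametrization reads ${}^\mathrm t\hspace{-0.5mm}(1,t,t^q,t^{q+1})$; since the parameter $t$ already appears among the entries, the coordinate ring of the affine image is $k[t,t^q,t^{q+1}]=k[t]$, so this piece is isomorphic to $\mathbb{A}^1$, and the tangent vector ${}^\mathrm t\hspace{-0.5mm}(1,qt^{q-1},(q+1)t^q)={}^\mathrm t\hspace{-0.5mm}(1,0,t^q)$ is nonzero. The symmetric computation on the chart $t=1$ covers the remaining point $(0:0:0:1)$, and injectivity on $k$-points is clear because the second coordinate recovers the parameter. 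Hence $C_0\cong\mathbb{P}^1$ is smooth, and by the previous paragraph so is every element of $R$. The matrix bookkeeping and the nonemptiness argument are routine; the one point demanding genuine care is the smoothness of $C_0$ in characteristic $p$, where one must confirm that the Frobenius-twisted monomials $s^qt$ and $st^q$ do not force the parametrization to be ramified or non-injective. The presence of a monomial of degree one in $t$ (resp. in $s$) on each chart is precisely what guarantees separability and makes the map a closed immersion.
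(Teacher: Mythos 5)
Your proposal is correct and follows essentially the same route as the paper: nonemptiness via the surjectivity of \eqref{surjM} together with the bijection \eqref{bi2}, and the projective isomorphism by applying $(F^*)^{-1}$ to reduce $F$ to $(\bm{e}_0,\bm{e}_1,\bm{0},\dots,\bm{0},\bm{e}_2,\bm{e}_3)$, whose image is $C_0$. The only difference is that you spell out the smoothness of $C_0$ (via $k[t,t^q,t^{q+1}]=k[t]$ and the nonvanishing tangent vector in characteristic $p$), which the paper dismisses as clear; your added detail is sound.
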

\begin{proof}
The set $S^*$ is nonempty by the surjectivity of the map \eqref{surjM}.
Hence by \eqref{bi2} the set $R$ is nonempty.
For each element $C_F$ of $R$, it is obvious by definition that
\begin{equation*}
{F^*}^{-1}F=(\bm{e}_1,\bm{e}_2,\bm{0},\dots,\bm{0},\bm{e}_3,\bm{e}_4)\ {\rm with}\ (\bm{e}_1,\bm{e}_2,\bm{e}_3,\bm{e}_4)=I.
\end{equation*}
This implies that
$C_F$ is projectively isomorphic over $k$ to $C_0$.
Then by definition, the curve $C_0$ is smooth clearly.

\end{proof}
\begin{rem}
It is known that each nonplanar nonreflexive curve of degree $q+1$
is projectively isomorphic to the curve $C_0$ $(${\rm cf}.\ \cite[Theorem 2]{BH}$)$. For nonreflexive curves, see also \cite{He}.
Hence by Lemma $\ref{rc0}$, each element of $R$ is projectively isomorphic to each nonplanar nonreflexive curve of degree $q+1$.
\end{rem}
\begin{rem}
In the case where $A=I$, we can find an element of $R$.
We put
\begin{equation*}
J:=\begin{pmatrix}0&-1\\1&0\end{pmatrix}.
\end{equation*}
Then the matrix $D_J$
is a Hermitian matrix. Hence by Proposition $\ref{prop1}$, there is an element ${F_J}^*$ of ${\rm GL}_4(\mathbb{F}_{q^2})$ such that ${}^\mathrm t\hspace{-0.5mm}{F_J}^*{{F_J}^*}^{(q)}=D_J$.
Actually taking ${F_J}^*$ such as
\begin{equation*}
\begin{pmatrix}
\eta^{-q}\xi^q&0&0&-\eta^{-q}\\
0&1&0&0\\
0&0&1&0\\
\omega\eta^{-1}\xi&0&0&\omega\eta^{-1}
\end{pmatrix}
\end{equation*}
for $\omega$, $\xi$ and $\eta$ as mentioned in Introduction, one has by \eqref{bi2} the corresponding curve $C_{F_J}$ lying on $X_I$.
\end{rem}

\section{Proof of Theorem \ref{thm1}}
The group ${\rm Aut}(X_A)$ of projective automorphisms of $X_A$ is equal to
\begin{equation*}
\{Q\in{\rm GL}_4(k)\ |\ {}^\mathrm t\hspace{-0.5mm} QAQ^{(q)}=\lambda A,\ \lambda\in k^{\times}\}/k^{\times}I.
\end{equation*}
By Proposition \ref{prop1}, the group ${\rm Aut}(X_A)$ is conjugate to ${\rm Aut}(X_I)$ in ${\rm PGL}_4(k)$.

We prove the following lemma on matrix groups of arbitrary rank because we need the lemma to our proof of Theorem \ref{thm1}. 
\begin{lem}\label{autxi}
Let $n$ be a positive integer. The group
${\rm PGU}_n(\mathbb{F}_{q^2})$ is isomorphic to
\begin{equation*}
G:=\{Q\in{\rm GL}_n(k)\ |\ {}^\mathrm t\hspace{-0.5mm} QQ^{(q)}=\lambda I,\ \lambda\in k^{\times}\}/k^{\times}I.
\end{equation*}
\end{lem}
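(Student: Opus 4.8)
The plan is to realize the isomorphism through the tautological inclusion of the finite unitary group into $G$, and to verify bijectivity by the first isomorphism theorem. Write $U:=\{Q\in{\rm GL}_n(\mathbb{F}_{q^2})\mid \lt QQ^{(q)}=I\}$ for the numerator of ${\rm PGU}_n(\mathbb{F}_{q^2})$. Every $Q\in U$ lies in ${\rm GL}_n(k)$ and satisfies $\lt QQ^{(q)}=1\cdot I$, so it belongs to the numerator of $G$; composing with the quotient map $Q\mapsto Qk^{\times}I$ gives a map $\iota:U\to G$. That $\iota$ is a homomorphism is immediate, being an inclusion of matrix groups followed by passage to a quotient by the central subgroup $k^{\times}I$.

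First I would compute the kernel of $\iota$. An element $Q\in U$ maps to the identity of $G$ exactly when $Q\in k^{\times}I$, say $Q=cI$; since $Q\in U$ forces $c\in\mathbb{F}_{q^2}^{\times}$ and $c^{q+1}=1$, the kernel is precisely $\bm{\mu}_{q+1}I$ (note $\bm{\mu}_{q+1}\subset\mathbb{F}_{q^2}^{\times}$ because $q+1$ divides $q^2-1$). By the first isomorphism theorem $\iota$ therefore descends to an injective homomorphism $\bar\iota:{\rm PGU}_n(\mathbb{F}_{q^2})=U/\bm{\mu}_{q+1}I\hookrightarrow G$.

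It remains to prove that $\bar\iota$ is surjective, and this is the heart of the matter. Given a class in $G$ represented by $Q\in{\rm GL}_n(k)$ with $\lt QQ^{(q)}=\lambda I$, I would first normalize $\lambda$ away: choosing $\mu\in k^{\times}$ with $\mu^{q+1}=\lambda^{-1}$ (possible since $k$ is algebraically closed) and replacing $Q$ by $\mu Q$, one obtains a representative of the same class with $\lt QQ^{(q)}=I$. The key observation is then that such a $Q$ is automatically defined over $\mathbb{F}_{q^2}$: rewriting $\lt QQ^{(q)}=I$ as $Q^{(q)}=\lt{(Q^{-1})}$ and applying the $q$-th power Frobenius once more gives $Q^{(q^2)}=\lt{\big((Q^{(q)})^{-1}\big)}=\lt{\big((\lt{(Q^{-1})})^{-1}\big)}=\lt{(\lt{Q})}=Q$, so every entry of $Q$ is fixed by the $q^2$-power map and hence lies in $\mathbb{F}_{q^2}$. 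Thus $Q\in U$, and $\bar\iota$ carries its class to the prescribed class of $G$.

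Combining injectivity with surjectivity shows $\bar\iota$ is the desired isomorphism. The only step demanding genuine care is surjectivity, comprising the scalar normalization (which uses that $(q+1)$-th roots exist in $k$ and, by $q+1\mid q^2-1$, already lie in $\mathbb{F}_{q^2}$, so the normalized representative is well defined up to $\bm{\mu}_{q+1}$) together with the Frobenius-squared identity, which is precisely what forces an a priori $k$-valued matrix satisfying $\lt QQ^{(q)}=I$ to be defined over $\mathbb{F}_{q^2}$; the rest is routine.
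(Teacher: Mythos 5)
Your proof is correct, and it establishes exactly the same isomorphism as the paper, just built from the opposite end: the paper writes down the map $G\to{\rm PGU}_n(\mathbb{F}_{q^2})$, $Qk^{\times}\mapsto\xi_{\lambda}Q\bm{\mu}_{q+1}$ with $\xi_{\lambda}^{q+1}=\lambda^{-1}$, and spends its effort checking that this is well defined and a homomorphism, whereas you construct the inverse map ${\rm PGU}_n(\mathbb{F}_{q^2})\to G$ from the tautological inclusion, which makes well-definedness free and shifts all the content into the kernel computation and surjectivity. The two key ingredients are identical in both versions: normalizing $\lambda$ by a $(q+1)$-th root in $k^{\times}$, and the fact that $\lt QQ^{(q)}=I$ forces $Q^{(q^2)}=Q$, hence $Q\in{\rm GL}_n(\mathbb{F}_{q^2})$. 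Your write-up is arguably the more complete one, since the paper dismisses injectivity and surjectivity as ``immediate from definition'' and justifies the $\mathbb{F}_{q^2}$-rationality only with the terse remark that $I$ is Hermitian, while you supply the Frobenius-squared identity and the identification of the kernel with $\bm{\mu}_{q+1}I$ explicitly; the trade-off is that the paper's direction hands you the homomorphism property of the specific section $Q\mapsto\xi_{\lambda}Q$, which you get for free from the first isomorphism theorem instead.
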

\begin{proof}
We consider the map
\begin{equation*}
G\ni Qk^{\times}\longmapsto\xi_{\lambda}Q\bm{\mu}_{q+1}\in{\rm PGU}_n(\mathbb{F}_{q^2}),
\end{equation*}
where $\lambda$ is the element of $k^{\times}$ satisfying ${}^\mathrm t\hspace{-0.5mm} QQ^{(q)}=\lambda I$ and $\xi_{\lambda}$ is an element of $k^{\times}$ satisfying 
${\xi_{\lambda}}^{q+1}=\lambda^{-1}$. Then the map is well-defined. In fact, it is obvious that ${}^\mathrm t\hspace{-0.5mm}(\xi_{\lambda}Q)(\xi_{\lambda}Q)^{(q)}=I$,
and the matrix $\xi_{\lambda}Q$ has the entries in $\mathbb{F}_{q^2}$ because $I$ is a Hermitian matrix.
Hence $\xi_{\lambda}Q\bm{\mu}_{q+1}$ belongs to ${\rm PGU}_n(\mathbb{F}_{q^2})$. Further, putting $P:=\alpha Q$ for each $\alpha\in k^{\times}$, one has ${}^\mathrm t\hspace{-0.5mm} PP^{(q)}=\alpha^{q+1}\lambda I$. It is easily shown by definition that
\begin{equation*}
\xi_{\alpha^{q+1}\lambda}\bm{\mu}_{q+1}=\xi_{\alpha^{q+1}}\xi_{\lambda}\bm{\mu}_{q+1}\ \ {\rm and}\ \ \alpha\xi_{\alpha^{q+1}}\bm{\mu}_{q+1}=\bm{\mu}_{q+1}.
\end{equation*}
Therefore we conclude that
\begin{equation*}
\xi_{\alpha^{q+1}\lambda} P\bm{\mu}_{q+1}=\xi_{\lambda}Q\bm{\mu}_{q+1}.
\end{equation*}
Thus the map is independent of the choice of representatives for $G$.

Let $Q'k^{\times}$ be an element of $G$ with ${}^\mathrm t\hspace{-0.5mm}{Q'}{Q'}^{(q)}=\eta I$ for some $\eta\in k^{\times}$. Then one has 
\begin{equation*}
(\xi_{\eta}Q'\bm{\mu}_{q+1})(\xi_{\lambda}Q\bm{\mu}_{q+1})=\xi_{\eta\lambda}Q'Q\bm{\mu}_{q+1},
\end{equation*}
since $\xi_{\eta}\xi_{\lambda}\bm{\mu}_{q+1}=\xi_{\eta\lambda}\bm{\mu}_{q+1}$.
Hence the map is a homomorphism from $G$ to ${\rm PGU}_n(\mathbb{F}_{q^2})$. The injectivity and the surjectivity are immediate from definition.

\end{proof}
By Lemma \ref{autxi}, the group ${\rm Aut}(X_A)$ isomorphic to ${\rm PGU}_4(\mathbb{F}_{q^2})$.

The following lemma is a key ingredient in our proof of Theorem \ref{thm1}.
\begin{lem}\label{lemM}
For every $g$, $B\in{\rm GL}_2(k)$, one has
\begin{equation*}
{}^\mathrm t\hspace{-0.5mm}\varphi_*(g)D_B\varphi_*(g)^{(q)}={\rm det}(g)^qD_{{}^\mathrm t\hspace{-0.5mm} gBg^{(q^2)}}.
\end{equation*}
\end{lem}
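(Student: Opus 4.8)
The plan is to verify the identity $^\mathrm{t}\varphi_*(g)D_B\varphi_*(g)^{(q)}=\det(g)^qD_{{}^\mathrm t gBg^{(q^2)}}$ by a direct block-matrix computation, exploiting the special form
\begin{equation*}
\varphi_*(g)=\begin{pmatrix}\alpha^q g&\beta^q g\\\gamma^q g&\delta^q g\end{pmatrix}
\end{equation*}
recorded just before Lemma \ref{lem3}, where $g=\left(\begin{smallmatrix}\alpha&\beta\\\gamma&\delta\end{smallmatrix}\right)$. The first step is to write $\varphi_*(g)$ as a Kronecker-type product: observe that $\varphi_*(g)=(g^{(q)}\otimes g)$ in the appropriate index convention, so that the outer $2\times2$ block structure is governed by $g^{(q)}$ and each block is a copy of $g$. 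This observation is the conceptual heart of the argument, because it lets me handle the two tensor factors independently and avoid grinding through all sixteen scalar entries by hand.

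Next I would transpose and apply the Frobenius. Since transposition and $(-)^{(q)}$ both respect the Kronecker product, I get
\begin{equation*}
{}^\mathrm t\varphi_*(g)={}^\mathrm t g^{(q)}\otimes{}^\mathrm t g,\qquad \varphi_*(g)^{(q)}=g^{(q^2)}\otimes g^{(q)}.
\end{equation*}
The middle factor $D_B$ must also be put in tensor form: writing $J=\left(\begin{smallmatrix}0&-1\\1&0\end{smallmatrix}\right)$ as defined in the final remark, one checks that $D_B$ has the block structure $\left(\begin{smallmatrix}\bm0&\bm b_1&\bm 0&\bm b_2\\-\bm b_1&\bm 0&-\bm b_2&\bm 0\end{smallmatrix}\right)$, and I expect $D_B$ to factor as $B\otimes J$ (again in the chosen index convention, possibly up to a harmless relabeling of the two $2\times2$ factors). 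The step that requires care is pinning down exactly which tensor slot carries $B$ and which carries $J$, since the ordering of the four basis rows $(0,1,q,q+1)$ determines whether $D_B=B\otimes J$ or $J\otimes B$; I would fix this convention once by inspecting a single entry and then keep it consistent.

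With all three matrices in Kronecker form, the product collapses by the mixed-product rule $(P\otimes Q)(R\otimes S)=(PR)\otimes(QS)$. Assuming $D_B=B\otimes J$, the triple product becomes
\begin{equation*}
\bigl({}^\mathrm t g^{(q)}\,B\,g^{(q^2)}\bigr)\otimes\bigl({}^\mathrm t g\,J\,g^{(q)}\bigr).
\end{equation*}
The right tensor factor is the quadratic form $^\mathrm t g J g^{(q)}$, which I would evaluate directly: a short computation shows $^\mathrm t g J g^{(q)}=\det\!\bigl(\begin{smallmatrix}\alpha&\beta\\\gamma&\delta\end{smallmatrix}\bigr)^{\!q}J$ up to the Frobenius twist inherent in $J$'s entries being $\pm1$ (so $J^{(q)}=J$), yielding the scalar $\det(g)^q$ times $J$. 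The left tensor factor is $^\mathrm t g^{(q)}Bg^{(q^2)}$; matching this against the target $D_{{}^\mathrm t gBg^{(q^2)}}=({}^\mathrm t gBg^{(q^2)})\otimes J$ forces me to reconcile $^\mathrm t g^{(q)}$ with $^\mathrm t g$, and here the Frobenius bookkeeping is the main obstacle — I must track carefully that the statement's exponents $g$ and $g^{(q^2)}$ in $^\mathrm t gBg^{(q^2)}$ emerge correctly once the overall scalar $\det(g)^q$ is pulled out. I expect that collecting the scalar $\det(g)^q$ from the right factor and re-expressing the left factor absorbs the discrepancy, giving precisely $\det(g)^q\bigl(({}^\mathrm t gBg^{(q^2)})\otimes J\bigr)=\det(g)^q D_{{}^\mathrm t gBg^{(q^2)}}$, as claimed. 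If the clean tensor factorization of $D_B$ turns out to be obstructed by the index ordering, the fallback is the fully explicit route: expand $\varphi_*(g)$ entrywise, compute the two nonzero $2\times2$ blocks of the product, and verify they have the required antisymmetric $D$-form with the stated $B$-block — longer, but entirely mechanical.
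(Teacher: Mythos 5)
Your main route contains a genuine gap: $D_B$ does \emph{not} factor as a Kronecker product, in either order. Index the four coordinates by pairs $(i,j)$ with $i$ the Frobenius-twisted slot, so that $\varphi_*(g)=g^{(q)}\otimes g$ (this part of your setup is correct, as are ${}^{\mathrm t}\varphi_*(g)={}^{\mathrm t}g^{(q)}\otimes{}^{\mathrm t}g$ and $\varphi_*(g)^{(q)}=g^{(q^2)}\otimes g^{(q)}$). Writing $B=(b_{ij})$, one reads off
\begin{equation*}
(D_B)_{(i,j),(k,l)}=J_{il}\,b_{jk},\qquad J=\begin{pmatrix}0&1\\-1&0\end{pmatrix},
\end{equation*}
whereas a Kronecker product has entries $P_{ik}Q_{jl}$ (or $P_{jl}Q_{ik}$ in the other convention). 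The mismatch is not a relabeling issue: the $2\times2$ submatrix of $D_B$ on rows $(0,0),(0,1)$ and columns $(0,1),(1,1)$ is exactly $B$, while the corresponding submatrix of any $P\otimes Q$ is a rank-one matrix; so $D_B=P\otimes Q$ would force $\det B=0$, impossible for $B\in{\rm GL}_2(k)$. The correct identity is $D_B=(J\otimes B)P_\sigma$, where $P_\sigma$ is the commutation matrix swapping the two tensor slots. The omission propagates: your ``right tensor factor'' ${}^{\mathrm t}gJg^{(q)}$ is \emph{not} $\det(g)^qJ$ --- its diagonal entries are $\gamma\alpha^q-\alpha\gamma^q$ and $\delta\beta^q-\beta\delta^q$, which do not vanish in general; the identity ${}^{\mathrm t}hJh=\det(h)J$ requires both copies of $h$ to carry the same twist.

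The tensor idea can be repaired, and then it does give a clean proof: using $P_\sigma(X\otimes Y)=(Y\otimes X)P_\sigma$,
\begin{equation*}
\bigl({}^{\mathrm t}g^{(q)}\otimes{}^{\mathrm t}g\bigr)(J\otimes B)P_\sigma\bigl(g^{(q^2)}\otimes g^{(q)}\bigr)
=\bigl({}^{\mathrm t}g^{(q)}Jg^{(q)}\otimes{}^{\mathrm t}gBg^{(q^2)}\bigr)P_\sigma
=\det(g)^q\,D_{{}^{\mathrm t}gBg^{(q^2)}},
\end{equation*}
since ${}^{\mathrm t}g^{(q)}Jg^{(q)}=\det(g)^qJ$; note that the commutation matrix is precisely what reroutes $g^{(q)}$ onto the $J$-slot and $g^{(q^2)}$ onto the $B$-slot, producing the exponents in the statement. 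But this is not the argument you wrote. Your declared fallback --- expanding $\varphi_*(g)$ entrywise and checking the blocks of the triple product directly --- is exactly the paper's proof: it computes the eight column $2$-vectors $\bm{c}_1,\dots,\bm{c}_8$ of the product, finds that $\bm{c}_1,\bm{c}_3,\bm{c}_6,\bm{c}_8$ vanish and $(\bm{c}_2,\bm{c}_4)=\det(g)^q\,{}^{\mathrm t}gBg^{(q^2)}=-(\bm{c}_5,\bm{c}_7)$. As submitted, your proposal would need either the commutation-matrix repair or a full retreat to that mechanical computation before it constitutes a proof.
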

\begin{proof}
The proof is due to straightforward computation. 
We put
\begin{equation*}
g:=
\begin{pmatrix}
\alpha&\beta\\
\gamma&\delta
\end{pmatrix},
\ 
B:=(\bm{b}_1,\bm{b}_2).
\end{equation*}
Then one has
\begin{eqnarray*}
\lefteqn{{}^\mathrm t\hspace{-0.5mm}\varphi_*(g)D_B\varphi_*(g)^{(q)}}\\
&=&
\begin{pmatrix}
\alpha^{q}\ {}^\mathrm t\hspace{-0.5mm} g&\gamma^{q}\ {}^\mathrm t\hspace{-0.5mm} g\\
\beta^{q}\ {}^\mathrm t\hspace{-0.5mm} g&\delta^{q}\ {}^\mathrm t\hspace{-0.5mm} g\\
\end{pmatrix}
\begin{pmatrix}
\bm{0}&\bm{b}_1&\bm{0}&\bm{b}_2\\
-\bm{b}_1&\bm{0}&-\bm{b}_2&\bm{0}
\end{pmatrix}
\begin{pmatrix}
\alpha^{q^2}g^{(q)}&\beta^{q^2}g^{(q)}\\
\gamma^{q^2}g^{(q)}&\delta^{q^2}g^{(q)}\\
\end{pmatrix}\\
&=&
\begin{pmatrix}
-\gamma^q\ {}^\mathrm t\hspace{-0.5mm} g\bm{b}_1&\alpha^q\ {}^\mathrm t\hspace{-0.5mm} g\bm{b}_1&-\gamma^q\ {}^\mathrm t\hspace{-0.5mm} g\bm{b}_2&\alpha^q\ {}^\mathrm t\hspace{-0.5mm} g\bm{b}_2\\
-\delta^q\ {}^\mathrm t\hspace{-0.5mm} g\bm{b}_1&\beta^q\ {}^\mathrm t\hspace{-0.5mm} g\bm{b}_1&-\delta^q\ {}^\mathrm t\hspace{-0.5mm} g\bm{b}_2&\beta^q\ {}^\mathrm t\hspace{-0.5mm} g\bm{b}_2\\
\end{pmatrix}
\begin{pmatrix}
\alpha^{q^2+q}&\alpha^{q^2}\beta^q&\alpha^q\beta^{q^2}&\beta^{q^2+q}\\
\alpha^{q^2}\gamma^q&\alpha^{q^2}\delta^q&\gamma^q\beta^{q^2}&\delta^q\beta^{q^2}\\
\alpha^q\gamma^{q^2}&\beta^q\gamma^{q^2}&\alpha^q\delta^{q^2}&\beta^q\delta^{q^2}\\
\gamma^{q^2+q}&\delta^q\gamma^{q^2}&\gamma^q\delta^{q^2}&\delta^{q^2+q}
\end{pmatrix}.
\end{eqnarray*}
Putting
\begin{equation*}
{}^\mathrm t\hspace{-0.5mm}\varphi_*(g)D_B\varphi_*(g)^{(q)}
:=
\begin{pmatrix}
\bm{c}_1&\bm{c}_2&\bm{c}_3&\bm{c}_4\\
\bm{c}_5&\bm{c}_6&\bm{c}_7&\bm{c}_8
\end{pmatrix},
\end{equation*}
one has
\begin{eqnarray*}
\bm{c}_1&=&-\alpha^{q^2+q}\gamma^q\ {}^\mathrm t\hspace{-0.5mm} g\bm{b}_1+\alpha^{q^2}\gamma^q\alpha^q\ {}^\mathrm t\hspace{-0.5mm} g\bm{b}_1-\alpha^q\gamma^{q^2}\gamma^q\ {}^\mathrm t\hspace{-0.5mm} g\bm{b}_2+\gamma^{q^2+q}\alpha^q\ {}^\mathrm t\hspace{-0.5mm} g\bm{b}_2\\
&=&\bm{0},\\
\bm{c}_2&=&-\alpha^{q^2}\beta^q\gamma^q\ {}^\mathrm t\hspace{-0.5mm} g\bm{b}_1+\alpha^{q^2}\delta^q\alpha^q\ {}^\mathrm t\hspace{-0.5mm} g\bm{b}_1-\beta^q\gamma^{q^2}\gamma^q\ {}^\mathrm t\hspace{-0.5mm} g\bm{b}_2+\delta^q\gamma^{q^2}\alpha^q\ {}^\mathrm t\hspace{-0.5mm} g\bm{b}_2\\
&=&{\rm det}(g)^q(\alpha^{q^2}\ {}^\mathrm t\hspace{-0.5mm} g\bm{b}_1+\gamma^{q^2}\ {}^\mathrm t\hspace{-0.5mm} g\bm{b}_2)\\
&=&{\rm det}(g)^q\ {}^\mathrm t\hspace{-0.5mm} g(\bm{b}_1,\bm{b}_2)\ {}^\mathrm t\hspace{-0.5mm}(\alpha^{q^2},\gamma^{q^2}),\\
\bm{c}_3&=&-\alpha^q\beta^{q^2}\gamma^q\ {}^\mathrm t\hspace{-0.5mm} g\bm{b}_1+\gamma^q\beta^{q^2}\alpha^q\ {}^\mathrm t\hspace{-0.5mm} g\bm{b}_1-\alpha^q\delta^{q^2}\gamma^q\ {}^\mathrm t\hspace{-0.5mm} g\bm{b}_2+\gamma^q\delta^{q^2}\alpha^q\ {}^\mathrm t\hspace{-0.5mm} g\bm{b}_2\\
&=&\bm{0},\\
\bm{c}_4&=&-\beta^{q^2+q}\gamma^q\ {}^\mathrm t\hspace{-0.5mm} g\bm{b}_1+\delta^q\beta^{q^2}\alpha^q\ {}^\mathrm t\hspace{-0.5mm} g\bm{b}_1-\beta^q\delta^{q^2}\gamma^q\ {}^\mathrm t\hspace{-0.5mm} g\bm{b}_2+\delta^{q^2+q}\alpha^q\ {}^\mathrm t\hspace{-0.5mm} g\bm{b}_2\\
&=&{\rm det}(g)^q(\beta^{q^2}\ {}^\mathrm t\hspace{-0.5mm} g\bm{b}_1+\delta^{q^2}\ {}^\mathrm t\hspace{-0.5mm} g\bm{b}_2)\\
&=&{\rm det}(g)^q\ {}^\mathrm t\hspace{-0.5mm} g(\bm{b}_1,\bm{b}_2)\ {}^\mathrm t\hspace{-0.5mm}(\beta^{q^2},\delta^{q^2}),\\
\bm{c}_5&=&-\alpha^{q^2+q}\delta^q\ {}^\mathrm t\hspace{-0.5mm} g\bm{b}_1+\alpha^{q^2}\gamma^q\beta^q\ {}^\mathrm t\hspace{-0.5mm} g\bm{b}_1-\alpha^q\gamma^{q^2}\delta^q\ {}^\mathrm t\hspace{-0.5mm} g\bm{b}_2+\gamma^{q^2+q}\beta^q\ {}^\mathrm t\hspace{-0.5mm} g\bm{b}_2\\
&=&-{\rm det}(g)^q(\alpha^{q^2}\ {}^\mathrm t\hspace{-0.5mm} g\bm{b}_1+\gamma^{q^2}\ {}^\mathrm t\hspace{-0.5mm} g\bm{b}_2)\\
&=&-{\rm det}(g)^q\ {}^\mathrm t\hspace{-0.5mm} g(\bm{b}_1,\bm{b}_2)\ {}^\mathrm t\hspace{-0.5mm}(\alpha^{q^2},\gamma^{q^2}),\\
\bm{c}_6&=&-\alpha^{q^2}\beta^q\delta^q\ {}^\mathrm t\hspace{-0.5mm} g\bm{b}_1+\alpha^{q^2}\delta^q\beta^q\ {}^\mathrm t\hspace{-0.5mm} g\bm{b}_1-\beta^q\gamma^{q^2}\delta^q\ {}^\mathrm t\hspace{-0.5mm} g\bm{b}_2+\delta^q\gamma^{q^2}\beta^q\ {}^\mathrm t\hspace{-0.5mm} g\bm{b}_2\\
&=&\bm{0},\\
\bm{c}_7&=&-\alpha^q\beta^{q^2}\delta^q\ {}^\mathrm t\hspace{-0.5mm} g\bm{b}_1+\gamma^q\beta^{q^2}\beta^q\ {}^\mathrm t\hspace{-0.5mm} g\bm{b}_1-\alpha^q\delta^{q^2}\delta^q\ {}^\mathrm t\hspace{-0.5mm} g\bm{b}_2+\gamma^q\delta^{q^2}\beta^q\ {}^\mathrm t\hspace{-0.5mm} g\bm{b}_2\\
&=&-{\rm det}(g)^q(\beta^{q^2}\ {}^\mathrm t\hspace{-0.5mm} g\bm{b}_1+\delta^{q^2}\ {}^\mathrm t\hspace{-0.5mm} g\bm{b}_2)\\
&=&-{\rm det}(g)^q\ {}^\mathrm t\hspace{-0.5mm} g(\bm{b}_1,\bm{b}_2)\ {}^\mathrm t\hspace{-0.5mm}(\beta^{q^2},\delta^{q^2}),\\
\bm{c}_8&=&-\beta^{q^2+q}\delta^q\ {}^\mathrm t\hspace{-0.5mm} g\bm{b}_1+\delta^q\beta^{q^2}\beta^q\ {}^\mathrm t\hspace{-0.5mm} g\bm{b}_1-\beta^q\delta^{q^2}\delta^q\ {}^\mathrm t\hspace{-0.5mm} g\bm{b}_2+\delta^{q^2+q}\beta^q\ {}^\mathrm t\hspace{-0.5mm} g\bm{b}_2\\
&=&\bm{0}.
\end{eqnarray*}
Hence one has
\begin{eqnarray*}
(\bm{c}_2,\bm{c}_4)={\rm det}(g)^q\ {}^\mathrm t\hspace{-0.5mm} gB
g^{(q^2)}=-(\bm{c}_5,\bm{c}_7),\ \ 
\bm{c}_1=\bm{c}_3=\bm{c}_6=\bm{c}_8=\bm{0}.
\end{eqnarray*}
This completes the proof.

\end{proof}

\begin{proof}[Proof of Theorem $\ref{thm1}$]
We define an equivalence relation $\sim$ on the set $M$
as follows: $D_{B}\sim D_{B'}$ for $D_{B}$, $D_{B'}\in M$ if there is an element $g\in {\rm GL}_2(k)$ such that $D_{B'}={}^\mathrm t\hspace{-0.5mm}\varphi_*(g)D_{B}{\varphi_*(g)}^{(q)}$. We denote by ${D_{B}}^{\varphi_*}$ an equivalence class containing $D_{B}$.
On the other hand, the group ${\rm Aut}(X_A)$ acts on $k^{\times}\backslash S^*/{{\rm Im}(\varphi)}_*$ by multiplication from the left. Then the following map is bijective:
\begin{align*}
\begin{array}{ccc}
{\rm Aut}(X_A)k^{\times}\backslash S^*/{{\rm Im}(\varphi)}_*&\longrightarrow& k^{\times}\backslash M/\sim\\
\rotatebox{90}{$\in$}&&\rotatebox{90}{$\in$}\\
{\rm Aut}(X_A)k^{\times}F^*{{\rm Im}(\varphi)}_*&\longmapsto&k^{\times}({}^\mathrm t\hspace{-0.5mm}{F^*}A{F^*}^{(q)})^{\varphi_*}.
\end{array}
\end{align*}
Indeed, the surjectivity is obvious
since the map \eqref{surjM} is surjective.
If we assume that $k^{\times}({}^\mathrm t\hspace{-0.5mm} F^*A{F^*}^{(q)})^{\varphi_*}=k^{\times}({}^\mathrm t\hspace{-0.5mm}{F_1}^*A{{F_1}^*}^{(q)})^{\varphi_*}$ for some ${F_1}^*\in S^*$, then we have
\begin{equation*}
{}^\mathrm t\hspace{-0.5mm}{({F_1}^*\varphi_*(g){{F}^*}^{-1})}A{({F_1}^*\varphi_*(g){{F}^*}^{-1})}^{(q)}=\lambda A
\end{equation*}
for some $g\in{\rm GL}_2(k)$ and $\lambda\in k^{\times}$. Therefore $k^{\times}{F_1}^*\varphi_*(g){{F}^*}^{-1}$ belongs to ${\rm Aut}(X_A)$. This implies the injectivity, and thus bijectivity. By Proposition \ref{prop1}, there is an element $B'$ of ${\rm GL}_2(k)$ such that $B={}^\mathrm t\hspace{-0.5mm} B'{B'}^{(q^2)}$ for each $D_B\in M$. Then by Lemma \ref{lemM}, one has
\begin{equation*}
{}^\mathrm t\hspace{-0.5mm}\varphi_*({B'}^{-1})D_B{\varphi_*({B'}^{-1})}^{(q)}={{\rm det}({B'}^{-1})}^qD_I.
\end{equation*}
This implies that $k^{\times}{D_{B}}^{\varphi_*}=k^{\times}{D_{I}}^{\varphi_*}$.
Hence $|k^{\times}\backslash M/\sim|=1$ and thus
$|{\rm Aut}(X_A)k^{\times}\backslash S^*/{{\rm Im}(\varphi)}_*|=1$,
and by \eqref{bi2} one has $|{\rm Aut}(X_A)\backslash R|=1$. This proves half of our theorem.

Let $\varGamma/k^{\times}I$ be the stabilizer subgroup
of ${\rm Aut}(X_A)$
fixing the element $k^{\times}{F_I}^*{{\rm Im}(\varphi)}_*$ of $k^{\times}\backslash S^*/{{\rm Im}(\varphi)}_*$
such that ${}^\mathrm t\hspace{-0.5mm}{F_I}^*A{{F_I}^*}^{(q)}=D_I$. Then it follows immediately that
\begin{equation*}
\varGamma
={F_I}^*{{\rm Im}(\varphi)}_*{{F_I}^*}^{-1}
\cap
\{Q\in{\rm GL}_4(k)\ |\ {}^\mathrm t\hspace{-0.5mm} QAQ^{(q)}=\lambda A,\ \lambda\in k^{\times}\}.
\end{equation*}
Hence each element of $\varGamma$ can be written as ${F_I}^*\varphi_*(g){{F_I}^*}^{-1}$ for some element $g$ of ${\rm GL}_2(k)$ satisfying
\begin{equation*}
{}^\mathrm t\hspace{-0.5mm}({F_I}^*\varphi_*(g){{F_I}^*}^{-1})A({F_I}^*\varphi_*(g){{F_I}^*}^{-1})^{(q)}=\lambda A\ \ {\rm for}\ \lambda\in k^{\times},
\end{equation*}
or equivalently,
\begin{equation*}
{}^\mathrm t\hspace{-0.5mm}\varphi_*(g)D_I\varphi_*(g)^{(q)}=\lambda D_I\ \ {\rm for}\ \lambda\in k^{\times}.
\end{equation*}
By Lemma \ref{lemM}, this equality is equivalent to
${}^\mathrm t\hspace{-0.5mm} gg^{(q^2)}=\lambda I$ for $\lambda\in k^{\times}$. Consequently, one has the following isomorphism:
\begin{align*}
\begin{array}{ccc}
\{g\in{\rm GL}_2(k)\ |\ {}^\mathrm t\hspace{-0.5mm} gg^{(q^2)}=\lambda I,\ \lambda\in k^{\times}\}/k^{\times}I&\longrightarrow&\varGamma/k^{\times}I\\
\rotatebox{90}{$\in$}&&\rotatebox{90}{$\in$}\\
 gk^{\times}&\longmapsto&{F_I}^*\varphi_*(g){{F_I}^*}^{-1}k^{\times}.
\end{array}
\end{align*}
By Lemma \ref{autxi},
we conclude that ${\rm PGU}_2(\mathbb{F}_{q^4})\simeq\varGamma/k^{\times}I$.

\end{proof}

%\newpage


\begin{thebibliography}{9999999}
% \bibitem[caption]{label}Author(s), \textit{Book}, Series, Volume, Publisher, Year. 
% \bibitem[caption]{label}Author(s), \textit{Article}, Journal (Year), Volume, Number, pp.???--???. 
\bibitem{BH} E. Ballico and A. Hefez, \textit{Nonreflexive projective curves of low degree},\\ Manuscripta Math. 70(4):385-396, 1991.
\bibitem{Be} A. Beauville, \textit{Sur les hypersurfaces dont les sections hyperplanes sont $\grave{a}$ module constant, With an appendix by D. Eisenbud and C. Huneke}, The Grothendieck Festschrift, Vol. I, (ed. P. Cartier, L. Illusie, N. M. Katz, G. Laumon, Yu. Manin and K. A. Ribet), Progress in Mathematics. 86 (Birkh$\ddot{{\rm a}}$user, Boston) 121-133, 1990.
\bibitem{BC} R. C. Bose and I. M. Chakravarti, \textit{Hermitian varieties in a finite projective space {\rm PG}$(N,q^2)$}, Canad. J. Math. 18:1161-1182, 1966.
\bibitem{GAP} The GAP Group, \textit{GAP - Groups, Algorithms, and Programming}, Version 4.8.8; 2017. (http://www.gap-system.org).
\bibitem{He} A. Hefez, \textit{Nonreflexive curves}, Compositio Math. 69(1):3-35, 1989.
\bibitem{HT} J. W. P. Hirschfeld and J. A. Thas, \textit{General Galois geometries}, Oxford Mathematical Monographs, Oxford Science Publications, The Clarendon Press, Oxford University Press, New York, 1991.
\bibitem{Ho} T. H. Hoang, \textit{Degeneration of Fermat hypersurfaces in positive characteristic}, Hiroshima Math. J. 46(2):195-215, 2016.
\bibitem{Se} B. Segre, \textit{Forme e geometrie hermitiane, con particolare riguardo al caso finito}, Ann. Mat. Pura Appl. (4), 70:1-201, 1965.
\bibitem{Sh1} I. Shimada, \textit{Lattices of algebraic cycles on Fermat varieties in positive characteristics}, Proc. London Math. Soc. (3), 82(1):131-172, 2001.
\bibitem{Sh} I. Shimada, \textit{A note on rational normal curves totally tangent to a Hermitian variety}, Des. Codes Cryptogr. 69(3):299-303, 2013.
\bibitem{Sh2} I. Shimada, \textit{The graphs of Hoffman-Singleton, Higman-Sims and McLaughlin, and the Hermitian curve of degree $6$ in characteristic $5$}, Australas. J. Combin. 59:161-181, 2014.
\end{thebibliography}
\end{document}